 \theoremstyle{plain}
  \newtheorem{thm}{Theorem}[section]
  \newtheorem{lem}[thm]{Lemma}
  \newtheorem{cor}[thm]{Corollary}
\theoremstyle{definition}
  \newtheorem{conj}[thm]{Conjecture}
  \newtheorem{ex}[thm]{Example}
 \newtheorem{asmp}[thm]{Assumption}
\theoremstyle{remark}
  \newtheorem{rem}[thm]{Remark}
  \newtheorem{ack}{Acknowledgment}
\numberwithin{equation}{section}
\DeclareMathOperator{\diam}{diam}               
\newcommand{\field}[1]{\mathbb{#1}}
\newcommand{\R}{\field{R}}                      
\renewcommand{\setminus}{-}
\begin{document}

\title{Collapsing and essential coverings }

\author{Takao Yamaguchi}

\address{Institute of Mathematics, University of Tsukuba, Tsukuba
 305-8571, JAPAN}

\email{takao@math.tsukuba.ac.jp}


\subjclass{Primary 53C20, 53C23}

\keywords{the Gromov-Hausdorff convergence, Alexandrov spaces, 
  Betti numbers}

\begin{abstract}
  In the present paper, we consider the family of all compact Alexandrov spaces
  with  curvature bound below having a definite upper diameter bound
  of a fixed dimension.
  We introduce the notion of essential coverings by contractible 
  metric balls, and provide a uniform bound on the numbers of contractible 
  metric balls forming essential coverings of the spaces in the
  family. In particular, this gives another view for Gromov's 
  Betti number theorem.
\end{abstract}

\maketitle

\section{Introduction} \label{sec:intro}

It is an important problem to find the relation between 
curvature and topology of Riemannian manifolds.
In the study of the finiteness of compact Riemannian manifolds
with uniformly bounded curvature, originated by Cheeger \cite{Ch:finite} and 
Weinstein \cite{Ws:homotopy}, it was a crucial idea to cover 
a manifold in a certain class  by convex metric balls whose
number is uniformly bounded depending only on the class.
The minimal number of 
such metric balls covering a manifold represents the 
complexity of the manifold.
For a certain class of  compact Riemannian manifolds with a lower
curvature bound, Grove, Petersen and Wu \cite{GPW} used 
contractible balls in place of convex balls to get 
a topological finiteness of the manifolds in the class.
Those are results in the non-collapsing cases.

In the present paper, 
we consider the collapsing case for  compact 
Alexandrov spaces $M$
with curvature  bounded below.
The Perelman stability theorem \cite{Pr:alex2} shows 
that a small metric ball around  a given point 
of $M$ is homeomorphic to the tangent cone, and hence contractible.

If $M$ is  collapsed, the sizes of contractible metric balls must be
very small,
and therefore the minimal number of contractible metric balls 
covering $M$ becomes large. In other words, 
it is not efficient to cover the whole $M$ 
by  contractible metric balls. 
In the present paper,  to overcome this difficulty, 
we introduce the notion of 
an {\it essential covering} of $M$ in place of a usual covering.

To illustrate the notion of essential covering, let us take 
a flat torus $T^2_{\epsilon}= S^1(1)\times S^1(\epsilon)$ for a 
small $\epsilon>0$, 
where $S^1(\epsilon)$ is the circle of length $\epsilon$.
The torus $T^2_{\epsilon}$ can be covered by two thin metric balls
$B_{\alpha}$, $\alpha\in \{ 1, 2\}$.
Each ball $B_{\alpha}$ is isotopic to a much smaller
concentric metric ball $\hat B_{\alpha}$ of radius, say $2\epsilon$.
If one tries to cover $B_{\alpha}$ by contractible metric balls,
we need too many, about $[1/\epsilon]$-pieces of such balls.
In stead, we take a covering of $\hat B_{\alpha}$. 
It is possible to cover $\hat B_{\alpha}$ by 
two contractible metric balls
$\{ B_{\alpha\beta}\}_{\beta=1}^2$.
Thus we have a collection of four contractible metric balls
$\{ B_{\alpha\beta}\}$, which is an essential covering of
$T^2_{\epsilon}$.
Although it is not a usual covering of $T^2_{\epsilon}$, 
deforming and enlarging $B_{\alpha\beta}$ 
by isotopies, 
we obtain a  covering  $\{ \tilde B_{\alpha\beta}\}$ 
of $T^2_{\epsilon}$ by contractible open subsets
$\tilde B_{\alpha\beta}$.
In that sense, the essential covering seems to contain an 
essential property of $T^2_{\epsilon}$.

In the general collapsing case with a lower curvature bound, 
it is believed that
a collapsed space has a certain fiber structure in a generalized
sense such that the fibers shrink to points 
(see \cite{SY:3mfd}, \cite{Ym:4mfd}). 
Although it is not established yet, a fiber which is not
visible yet may shrink to a point with different scales in different
directions, in general. This suggests that
we have to repeat the above process of taking a smaller concentric metric
ball and of covering it by much smaller metric balls
at most $n$-times, $n=\dim M$, to finally reach contractible metric
balls (see Examples \ref{ex:torus} and \ref{ex:nil}). 
In this way, we come to the notion of 
an {\it essential covering} of $M$ with {\it depth} $\le n$.

As illustrated above, an essential covering is not a usual covering of 
$M$, but it contains an essential feature on the complexity of the space $M$.
Actually by deforming and enlarging the balls in the essential
covering by isotopies of $M$ in a 
systematic way, we obtain a real  open covering of $M$.

We define the geometric invariant $\tau_n(M)$ as the minimal number of 
contractible metric balls forming essential coverings of $M$ with
depth $\le n$.
See Section \ref{sec:cover} for a more refined formulation of 
$\tau_n(M)$. In particular, if $M$ is a Riemannian manifold, 
we can replace contractible metric balls by metric balls
homeomorphic to an $n$-disk in the definition of $\tau_n(M)$.

For a positive integer $n$ and $D>0$, we denote by 
$\mathcal A(n,D)$ the isometry  classes of $n$-dimensional compact
Alexandrov spaces with curvature $\ge -1$ and diameter $\le D$.
In this paper, we shall prove

\begin{thm} \label{thm:cover}
For given  $n$ and $D$, there is a positive integer $N(n,D)$
such that  $\tau_n(M)\le N(n,D)$ for all $M$ in $\mathcal A(n,D)$.
\end{thm}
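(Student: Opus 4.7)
I would prove Theorem \ref{thm:cover} by a double induction, on dimension $n$ and on the ``collapse depth'' of the sequence, combined with a contradiction argument inside Gromov's precompact class $\mathcal A(n,D)$ together with the Perelman stability theorem. The base case $n=0$ is trivial since $\mathcal A(0,D)$ consists of a single point. Assume inductively that $\tau_m(Y) \le N(m,D)$ for every $Y \in \mathcal A(m,D')$ and every $m < n$ and every $D'$ bounded in terms of $D$. Suppose for contradiction that no uniform bound $N(n,D)$ exists; then one can pick a sequence $\{M_i\} \subset \mathcal A(n,D)$ with $\tau_n(M_i) \to \infty$. By Gromov precompactness, after extracting a subsequence, $M_i \to X$ in the Gromov--Hausdorff topology, where $X$ is a compact Alexandrov space with curvature $\ge -1$, $\diam X \le D$, and $k := \dim X \le n$.

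\textbf{Non-collapsing case} ($k = n$). Here I would invoke the Perelman stability theorem: for large $i$, there is a homeomorphism $f_i \colon X \to M_i$ close to a fixed Gromov--Hausdorff approximation. By stability of the tangent cone at each $p \in X$, sufficiently small metric balls in $X$ are contractible, hence $X$ is covered by finitely many contractible metric balls, the cardinality being bounded uniformly over $\mathcal A(n,D)$ by a Bishop--Gromov ball-packing argument. Pushing this covering forward by $f_i$ (and replacing the images with concentric metric balls of controlled radii in $M_i$, which are isotopic to contractible sets in $M_i$ by stability on small scales) yields an essential covering of $M_i$ of depth $\le 1$ by a uniformly bounded number of contractible balls, contradicting $\tau_n(M_i) \to \infty$.

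\textbf{Collapsing case} ($k < n$). Apply the inductive hypothesis to $X \in \mathcal A(k,D)$ to get an essential covering $\{B(x_\alpha, r_\alpha)\}_{\alpha=1}^{N(k,D)}$ of $X$ of depth $\le k$ by contractible balls. For each $\alpha$, lift $x_\alpha$ to $x_{\alpha,i} \in M_i$ via a Gromov--Hausdorff approximation, and consider the ball $B_{\alpha,i} := B(x_{\alpha,i}, r_\alpha) \subset M_i$. This ball is not contractible in general because it collapses in its fiber direction. However, by the general philosophy recalled in the introduction (the fiber structure in collapsed Alexandrov spaces), on a smaller concentric ball $\hat B_{\alpha,i}$ one finds a collapse in strictly lower effective dimension. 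Rescaling $\hat B_{\alpha,i}$ by $1/\epsilon_{\alpha,i}$, where $\epsilon_{\alpha,i}$ is the intrinsic diameter of the collapsing fiber, yields a pointed sequence in $\mathcal A(n, D'')$ with curvature $\ge -\epsilon_{\alpha,i}^2 \to 0$ whose Gromov--Hausdorff limit has dimension strictly greater than $\dim X$; hence the inductive hypothesis applies on this deeper scale, producing an essential covering of $\hat B_{\alpha,i}$ by a bounded number of contractible balls whose depth increases by one. Assembling over all $\alpha$ gives an essential covering of $M_i$ of depth $\le n$ by at most $N(k,D) \cdot N(n-1, D'')$ contractible balls, again contradicting $\tau_n(M_i) \to \infty$.

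\textbf{Main obstacle.} The serious difficulty is the collapsing step: one must argue that each rescaling ``peels off'' at least one direction of collapse so that after at most $n$ iterations only non-collapsed scales remain, while simultaneously controlling (i) that the constants $D''$ arising after rescaling stay in a class to which the inductive hypothesis applies, (ii) that the rescaling is compatible with the nesting of essential coverings (so the outer ball $\hat B_{\alpha,i}$ is literally enlargeable by isotopy to the inner contractible pieces, as demanded by the definition of essential covering), and (iii) that the depth counter really is bounded by $n$ across all $\alpha$, not just in the limit. Making this last point precise requires invoking the stability/fibration package on successively smaller scales in a uniform way within the Gromov--Hausdorff class, which is the technical heart of the argument.
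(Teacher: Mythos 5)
Your overall scheme (contradiction inside the precompact class, Perelman stability in the non-collapsed case, rescaling in the collapsed case, reassembling via nested coverings) is the same as the paper's, but the step you flag as the ``main obstacle'' is precisely where the proposal has a genuine gap rather than a proof. In the collapsing case you rescale by ``the intrinsic diameter of the collapsing fiber'' and appeal to ``the general philosophy'' of fiber structures; the paper explicitly points out that no such fibration is established for collapsing Alexandrov spaces, and there is no fiber whose diameter you can take. The technical heart of the paper is Theorem \ref{thm:rescal}, which replaces the fiber by a concrete construction: one chooses $\hat p_i$ near $p_i$ as a maximum point of the averaged minimum-distance function $f^i=\min_\alpha \frac{1}{N_\alpha}\sum_\beta d(x^i_{\alpha\beta},\cdot)$ (built from a net on a distance sphere in the limit $X$), takes $\delta_i$ to be the largest distance from $\hat p_i$ to a critical point of $d_{\hat p_i}$ in $B(\hat p_i,r)$, and then proves two things you simply assert: (i) $d_{\hat p_i}$ has no critical points on the annulus $A(\hat p_i;R\delta_i,r)$, so $B(\hat p_i,R\delta_i)$ is an \emph{isotopic subball} of $B(\hat p_i,r)$ — exactly what the definition of an isotopy covering system requires; and (ii) any limit $Y$ of $(\frac{1}{\delta_i}M_i,\hat p_i)$ satisfies $\dim Y\ge\dim X+1$, via a packing comparison in $\Sigma_{y_0}$ (the directions $v_{\alpha\beta}$ are forced to make angle exactly $\pi/2$ with the direction to the limit critical point, and the two density estimates \eqref{eq:net} and \eqref{eq:net2} force the dimension to jump). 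Without this, nothing guarantees that each rescaling ``peels off'' a direction or that the inner balls sit inside the outer one by an isotopy, so the depth bound $\le n$ and the very structure of the essential covering are unsupported.

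A second, related mismatch is the induction scheme. You induct on the dimension $n$ of the compact spaces with a diameter bound, but after rescaling the sequence is still $n$-dimensional and the limit $Y$ is complete \emph{noncompact}, so your inductive hypothesis (a bound for $\mathcal A(m,D')$ with $m<n$) does not apply to it. The paper instead runs a reverse induction on the dimension of the \emph{limit} space, in a local, pointed form: Lemma \ref{lem:pt-cover} bounds $\tau^*_{n-k}$ of small balls for sequences in $\mathcal A_p(n)$ converging to a $k$-dimensional limit, the dimension jump from Theorem \ref{thm:rescal} driving the induction, and Theorem \ref{thm:refine-cover} then globalizes by a compactness/contradiction argument. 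To repair your proposal you would need both the local pointed formulation of the inductive statement and a proof of the rescaling theorem (or an equivalent substitute) rather than an appeal to an unestablished fibration picture.
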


This gives a new geometric  restriction on the spaces in 
$\mathcal A(n,D)$ even in the case of Riemannian manifolds.
A more detailed information on the essential covering of $M$ 
is given in Theorem \ref{thm:refine-cover}.

The minimal number ${\rm cell}(M)$ of open cells needed to cover a 
compact manifold $M$ is an interesting topological invariant.
It is known however that  ${\rm cell}(M)\le\dim M+1$
(see \cite{CLOT:cat}).
This suggests that to have better understanding of the 
complexity of a compact Riemannian manifold or a 
compact Alexandrov space concerning the minimal 
number of some basic subsets needed to cover it,
we have to consider a {\it metric} invariant
rather than a topological invariant.
This is the reason why we mainly consider coverings
by metric balls.

Working with concentric coverings, Theorem \ref{thm:cover}
yields the following 
uniform bound on the total Betti number:

\begin{cor} [\cite{Gr:betti},\cite{LiSh:betti}]\label{cor:sum}
For given  $n$ and $D$, there is a positive integer $C(n,D)$
such that if $M$ is in $\mathcal A(n,D)$, then 
\[
       \sum_{i=0}^n b_i(M;F) \le C(n,D),
\]
where $F$ is any field.
\end{cor}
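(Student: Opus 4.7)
The plan is to deduce the Betti number bound from Theorem~\ref{thm:cover} via a Mayer--Vietoris recursion that exploits the hierarchical structure of an essential covering. First, I apply Theorem~\ref{thm:cover} to produce an essential covering $\{B_\alpha\}$ of $M$ by at most $N(n,D)$ contractible metric balls, organized into a tree of depth at most $n$ in which siblings sit inside larger concentric companion balls. Then, by carrying out the deformations and enlargements sketched in the introduction inside those concentric companions, I promote the essential covering to an honest open covering $\{\tilde B_\alpha\}$ of $M$ by contractible open sets of the same cardinality. The concentric structure is used essentially here: the isotopies live inside the bigger concentric parents of the $B_\alpha$ and preserve contractibility.

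Next, I apply the standard Mayer--Vietoris inequality
\[
\sum_i b_i(A\cup B;F) \le \sum_i b_i(A;F) + \sum_i b_i(B;F) + \sum_i b_i(A\cap B;F)
\]
inductively over the pieces $\tilde B_\alpha$. Each $\tilde B_\alpha$ is contractible, so contributes exactly $1$ to $\sum_i b_i$. The intersections $\tilde B_{\alpha_1}\cap\dots\cap\tilde B_{\alpha_k}$ are handled by a secondary induction on depth: such an intersection lies inside a smaller concentric layer of each of the participating balls and is therefore again essentially covered by the children of those balls in the tree, yielding an essential covering of strictly smaller depth and cardinality bounded in terms of $N(n,D)$. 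Because contractibility is preserved by the enlargement procedure, the base case (depth $0$ intersections) contributes only $1$.

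The principal difficulty is verifying that the concentric enlargement process is compatible with intersections, so that each multiple intersection really does inherit an essential covering of uniformly bounded size and strictly smaller depth. This compatibility is what allows the recursion to terminate after at most $n$ levels. Once it is in place, iterating the Mayer--Vietoris inequality along the depth-$n$ tree produces a bound of the form $C(n,D) \le 2^{n\,N(n,D)}$ (or a similar crude function of $n$ and $D$), since at each recursive step each branch contributes at most a bounded multiplicative factor. Field independence of the estimate is automatic from the Mayer--Vietoris argument, which uses only the formal properties of singular homology with field coefficients.
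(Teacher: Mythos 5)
Your plan founders exactly at the point you flag as ``the principal difficulty,'' and that difficulty is not a technicality to be checked but the actual content of the corollary. After you promote the essential covering to an honest covering $\{\tilde B_{\alpha}\}$, the sets $\tilde B_{\alpha}$ are images of metric balls under compositions of isotopies coming from \emph{different} branches of the tree; they are no longer metric balls, and there is no control whatsoever on an intersection $\tilde B_{\alpha}\cap\tilde B_{\beta}$ when $\alpha$ and $\beta$ are not nested in the tree. In particular, such an intersection does not ``lie inside a smaller concentric layer of each of the participating balls,'' and the children of $B_{\alpha}$ in the isotopy covering system only cover an isotopic subball of $B_{\alpha}$ --- they say nothing about $B_{\alpha}\cap B_{\beta}$ for a sibling or cousin $\beta$. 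So the secondary induction you propose (each multiple intersection inherits an essential covering of strictly smaller depth and bounded cardinality) has no basis, and without it the Mayer--Vietoris recursion cannot even start: iterating the inequality over $N$ sets forces you to bound the total Betti numbers of all multiple intersections, which may be wild or even infinite for these isotoped open sets. Collapsing examples already show that a mere bound on the number of contractible pieces in a covering does not by itself bound Betti numbers of intersections.

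The paper circumvents precisely this obstruction by never bounding the homology of intersections. It keeps the genuine metric balls of the isotopy covering system, forms concentric enlargements $B^i_{\alpha}=10^i B_{\alpha}$, $0\le i\le n+1$, and invokes Gromov's topological lemma (Lemma \ref{lem:top}), which bounds the rank of the inclusion $H_*(A^0)\to H_*(A^{n+1})$ by the ranks of the inclusion homomorphisms between \emph{nested} intersections $\bigcap_j B^i_{\gamma_j}\subset\bigcap_j B^{i+1}_{\gamma_j}$. For concentric metric balls one has the sandwich
\[
 B^i_{\gamma_1}\cap\cdots\cap B^i_{\gamma_m}\subset B^i_{\gamma_s}
   \subset \tfrac12 B^{i+1}_{\gamma_s}
   \subset B^{i+1}_{\gamma_1}\cap\cdots\cap B^{i+1}_{\gamma_m},
\]
where $B_{\gamma_s}$ has the smallest radius, so each such rank is bounded by $\beta(B_{\gamma_s})$, the total Betti number of a \emph{single} ball of the system; these are bounded by $C_n$ via a reverse induction on the depth (Lemma \ref{lem:ball-sys}), and the whole argument is run through the $\delta$-content, i.e.\ the rank of $H_*(X)\to H_*(U_{\delta}(X))$ (Theorem \ref{thm:closed}), using the refined statement of Theorem \ref{thm:refine-cover} and Remark \ref{rem:delta} rather than the bare count of Theorem \ref{thm:cover}. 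If you want to salvage your approach, you must either reproduce this rank-of-inclusion mechanism or find an independent way to control the homology of intersections of the enlarged sets; as written, the proposal does not prove the corollary.
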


In the original work \cite{Gr:betti}, Gromov
developed the critical point theory for distance functions to 
obtain an explicit bound on the total Betti numbers for Riemannian manifolds.
The argument in \cite{LiSh:betti} is a natural extension of that
in \cite{Gr:betti} to Alexandrov spaces.
Unfortunately our bound is not explicit. However our approach provides
a conceptually clear view of what the essence of Corollary
\ref{cor:sum}
is like.

For the proof of Theorem \ref{thm:cover}, we use the convergence and
collapsing methods. If a space $M$ in $\mathcal A(n,D)$ does not
collapse,
the stability theorem immediately yields the consequence.
If $M$ collapses to a lower dimensional space, we use 
the rescaling method, which was used in \cite{SY:3mfd} and 
\cite{Ym:4mfd} in some special cases. We first generalize those 
results to the general case. Using this rescaling method, we can 
grasp the proper size of a collapsed ``fiber'' although it is not
visible. 
This enables us to 
have a covering $\{ B_{\alpha_1}\}$ of $M$ such that each ball 
$B_{\alpha_1}$  is,  under some rescaling of metric with the fiber size,
close to 
a complete noncompact Alexandrov
space $Y_1$ of nonnegative curvature with $\dim Y_1>\dim X$
for the pointed Gromov-Hausdorff topology.
If the ``fiber'' uniformly shrinks to a point, the new convergence 
$B_{\alpha_1}\to Y_1$ 
does not collapse. In the other collapsing case of 
$\dim Y_1 <n$, 
we again grasp the size of a new ``fiber''
in the collapsing $B_{\alpha_1}\to Y_1$ 
with the help of the rescaling method. 
From this, we see that a much smaller concentric subball 
$\hat B_{\alpha_1}$ of
$B_{\alpha_1}$, which is isotopic to $B_{\alpha_1}$, can be covered by 
small metric balls $\{ B_{\alpha_1\alpha_2}\}_{\alpha_2}$
whose number is uniformly bounded  such that
each  $B_{\alpha_1\alpha_2}$ is,  under some rescaling with the size
of a new ``fiber'',  
close to a complete noncompact  Alexandrov
space $Y_2$ of nonnegative curvature with $\dim Y_2>\dim Y_1$
for the pointed Gromov-Hausdorff topology.
Repeating this process at most $n-\dim X$ times and using 
the stability theorem,
we finally get an essential covering of $M$ by contractible metric balls,
as required.

Corollary \ref{cor:sum} follows almost directly from Theorem 
\ref{thm:cover} and the topological lemma of \cite{Gr:betti}.
Actually we formulate and prove a more general result for every subset
of an $n$-dimensional complete Alexandrov space 
in terms of $\delta$-content (see Theorem \ref{thm:closed}).

\begin{ack}
   I would like to thank Vitali Kapovitch for 
  bringing the paper \cite{Kp:non-collapse} to my attention.
\end{ack}


\section{Preliminaries} \label{sec:prelim}

\noindent

We refer to Burago, Gromov and Perelman \cite{BGP}  for the basic materials 
on Alexandrov spaces with curvature bounded below.

Let $M$ be an  Alexandrov space with 
curvature bounded below, say $\ge \kappa$.
For two points $x$ and $y$ in $M$, a minimal geodesic joining 
$x$ to $y$ is denoted by $xy$ for simplicity. 
For any geodesic triangle 
$\Delta xyz$ in $M$ with vertices $x, y$ and $z$, we denote by 
$\tilde\Delta xyz$ a {\it comparison triangle} 
in the $\kappa$-plane $M_{\kappa}^2$, 
the simply connected complete surface with constant curvature 
$\kappa$.
The angle between the geodesics $xy$ and $yz$ in $M$ is denoted by 
$\angle xyz$, and the corresponding angle of $\tilde\Delta xyz$
by $\tilde\angle xyz$.
It holds that 
\[
   \angle xyz \ge \tilde\angle xyz.
\]
Let  $\Sigma_p=\Sigma_p(M)$ denote the space of directions 
at $p\in M$.
Let $K_p=K_p(M)$ be the tangent cone at $p$ with vertex $o_p$,
the Euclidean cone over $\Sigma_p$.
For a closed set $A\subset M$ and $p\in M\setminus A$, we denote by 
$A^{\prime}=A^{\prime}_p$ the subset of $\Sigma_p$ consisting 
of all the directions of minimal geodesics from $p$ to $A$.

From now on, we assume that $M$ is finite-dimensional.
It is known that $\Sigma_p$ (resp. $K_p$) is a $(n-1)$-dimensional 
compact (resp. $n$-dimensional complete noncompact) Alexandrov 
space with curvature $\ge 1$ (resp. curvature $\ge 0$), where
$n=\dim M$.

It is well-known that as $r\to 0$, $(\frac{1}{r} M,p)$ converges
to $(K_p,o_p)$ with respect to the  pointed  Gromov-Hausdorff topology,
where  $\frac{1}{r} M$ denotes the rescaling of the original 
distance of $M$ multiplied by  $\frac{1}{r}$.


We denote by $\mathcal A_p(n)$ 
the isometry  classes of $n$-dimensional complete 
pointed Alexandrov spaces $(M,p)$ with curvature $\ge -1$.

The following results play crucial roles in this paper.

\begin{thm}[\cite{GLP},\cite{GrKPS}] \label{thm:precpt}
$\mathcal A(n,D)$ (resp. $\mathcal A_p(n)$) is relatively compact 
with respect to the Gromov-Hausdorff distance
(resp. the pointed  Gromov-Hausdorff topology).
\end{thm}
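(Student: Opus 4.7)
The plan is to reduce the statement to Gromov's abstract precompactness criterion and then verify its hypothesis using a Bishop--Gromov type volume comparison available in the Alexandrov setting.

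First I would recall the general criterion: a family $\mathcal F$ of pointed metric spaces is precompact in the pointed Gromov--Hausdorff topology if and only if it is uniformly totally bounded on every scale, meaning that for each $R,\epsilon>0$ there is $N=N(R,\epsilon)$ such that in every $(X,p)\in\mathcal F$ the closed $R$-ball $\bar B_R(p)$ can be covered by at most $N$ balls of radius $\epsilon$. For the compact (unpointed) family $\mathcal A(n,D)$, once we pick any basepoint $p\in M$ we have $M=\bar B_D(p)$, so the same criterion applies with the single choice $R=D$. Thus the entire problem reduces to producing a uniform packing bound inside $\bar B_R(p)$ depending only on $n$, $R$ and the curvature bound $-1$.

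Next I would establish the packing estimate. The key geometric input is the Bishop--Gromov relative volume comparison for $n$-dimensional Alexandrov spaces with curvature $\ge -1$: for every $p$ and every $0<r<R'$,
\[
\frac{\Hm^n(B_r(p))}{\Hm^n(B_{R'}(p))}\;\ge\;\frac{v_{-1}^n(r)}{v_{-1}^n(R')},
\]
where $v_{-1}^n(s)$ denotes the volume of an $s$-ball in the hyperbolic model space $M_{-1}^n$. Granting this, choose a maximal $\epsilon/2$-separated subset $\{x_i\}$ of $\bar B_R(p)$; the balls $B_{\epsilon/4}(x_i)$ are disjoint and contained in $B_{R+\epsilon/4}(p)\subset B_{2R}(p)$, so applying the relative volume inequality at each $x_i$ yields
\[
\#\{x_i\}\;\le\;\frac{v_{-1}^n(2R)}{v_{-1}^n(\epsilon/4)}\;=:\;N(n,R,\epsilon).
\]
By maximality, the $\epsilon/2$-balls around the $x_i$ cover $\bar B_R(p)$, which gives the desired uniform cover by $N(n,R,\epsilon)$ balls of radius $\epsilon$. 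Specializing to $R=D$ gives the bound for $\mathcal A(n,D)$; letting $R$ vary gives it for $\mathcal A_p(n)$.

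The hard part, and really the only substantive step, is the Bishop--Gromov inequality in the Alexandrov category, since the other ingredients are formal. I would derive it along the classical lines: use Toponogov's comparison to control the metric along geodesics emanating from $p$, parametrize $M$ (up to a measure-zero cut locus) via directions in $\Sigma_p$ and the exponential map, and show that the radial density of $\Hm^n$ is pointwise dominated by the corresponding density on $M_{-1}^n$. The measure-theoretic setup on an Alexandrov space --- in particular the fact that $\Hm^n$ is the correct volume and that the singular set is $\Hm^n$-null --- is what has to be invoked from \cite{BGP}, and this is the main obstacle one faces when producing a self-contained proof. Once the volume comparison is in place, the precompactness conclusion follows immediately from the argument above.
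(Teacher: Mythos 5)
Your proposal is essentially correct, but note first that the paper offers no proof of this statement at all: it is quoted as a known result of Gromov (\cite{GLP},\cite{GrKPS}), so there is no internal argument to compare against; the comparison can only be with the standard proofs in those references and in \cite{BGP}. Your route --- Gromov's abstract criterion (uniform total boundedness at every scale) plus a packing bound derived from a Bishop--Gromov relative volume comparison for Alexandrov spaces --- is a valid proof, and it is exactly Gromov's original argument in the Ricci-bounded Riemannian setting. Two remarks. First, a small slip in the packing step: the relative volume inequality must be applied at a center $x_i$ to a large ball \emph{centered at $x_i$} that contains all the disjoint $\epsilon/4$-balls, e.g.\ $B_{2R+\epsilon/4}(x_i)\supset B_{R+\epsilon/4}(p)$, giving $N\le v_{-1}^n(2R+\epsilon/4)/v_{-1}^n(\epsilon/4)$ (your $B_{2R}(p)$ is centered at the wrong point); this is trivially repaired and costs only the constant. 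Second, and more substantively, you correctly identify the Bishop--Gromov inequality in the Alexandrov category as the heavy input; it is true and can be cited from \cite{BGP}, but it is not needed. The proof in \cite{BGP} bounds the $\epsilon$-packing number of $B_R(p)$ with no measure theory at all, by induction on dimension: $\epsilon$-separated points lying in a fixed distance annulus around $p$ determine uniformly separated directions in $\Sigma_p$ (Toponogov comparison), and $\Sigma_p$ is an $(n-1)$-dimensional Alexandrov space with curvature $\ge 1$ and diameter $\le\pi$, so its packing numbers are bounded by the inductive hypothesis. That argument buys you a self-contained, purely synthetic proof avoiding Hausdorff-measure foundations, whereas your volume route buys the explicit constant $v_{-1}^n(2R+\epsilon/4)/v_{-1}^n(\epsilon/4)$ once the (nontrivial) volume comparison is granted.
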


Consider the distance function $d_p(x)=d(p,x)$ from a 
point $p\in M$. A point $q\neq p$ is a {\it critical point} of $d_p$
if $\tilde\angle pqx \le \pi/2$ for all $x\in M$.

For $0< r < R$, $A(p;r,R)$ denotes the closed annulus 
$\bar B(p,R) - B(p,r)$, where $B(p,r)$ is the open metric ball around
$p$  of radius $r$.

\begin{lem}[\cite{GS:sphere},\cite{Gr:betti},\cite{Pr:alex2}] \label{lem:iso}
If $d_p$ has no critical points on $A(p;r,R)$, then 
$A(p;r,R)$ is homeomorphic to $\partial B(p,r)\times [0,1]$.
\end{lem}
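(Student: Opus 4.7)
The plan is to construct a continuous flow on $A(p;r,R)$ along which $d_p$ is strictly increasing with a definite rate, and then use this flow to produce a product structure on the annulus.

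First I would extract local escape directions. For each $q\in A(p;r,R)$, non-criticality of $d_p$ at $q$ gives a point $x_q\in M$ such that $\tilde\angle pqx_q>\pi/2+\epsilon_q$ for some $\epsilon_q>0$. By upper semicontinuity of comparison angles, there is an open neighborhood $U_q\ni q$ such that the same strict inequality (with $\epsilon_q/2$) holds for every $q'\in U_q$ with the same $x_q$. Geometrically, this means that at every $q'\in U_q$ the direction of the minimal geodesic $q'x_q$ makes an angle bigger than $\pi/2$ with every direction pointing from $q'$ towards $p$, so moving from $q'$ a short distance along that geodesic \emph{increases} $d_p$ by at least some uniform positive fraction of the step length.

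Next, by compactness of the closed annulus $A(p;r,R)$, I would select a finite subcover $U_{q_1},\ldots,U_{q_N}$ with corresponding target points $x_1,\ldots,x_N$ and a uniform positive lower bound on the rate of increase of $d_p$ along the chosen geodesic segments. Using a Lipschitz partition of unity $\{\varphi_i\}$ subordinate to this cover, one assembles a pseudo-gradient flow $\Phi_t$: at each point $q'$ one moves for short time along a direction obtained by averaging (in an appropriate Alexandrov-geometric sense, e.g.\ via Perelman's admissible functions/quasigeodesic flows as in \cite{Pr:alex2}) the unit vectors toward those $x_i$ with $\varphi_i(q')>0$. Each individual direction strictly increases $d_p$, so the averaged flow does too, and one obtains a continuous flow $\Phi_t$ on $A(p;r,R)$ along which $d_p\circ\Phi_t$ is strictly increasing with a definite uniform rate.

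Finally, I would use $\Phi_t$ to build the homeomorphism. Since $d_p\circ\Phi_t$ increases with uniform speed, every trajectory starting on $\partial B(p,r)$ reaches $\partial B(p,R)$ in finite time, and every point of $A(p;r,R)$ lies on exactly one such trajectory. The map $\partial B(p,r)\times[0,1]\to A(p;r,R)$ defined by $(q,s)\mapsto\Phi_{\tau(q,s)}(q)$, where $\tau(q,s)$ is chosen so that $d_p(\Phi_{\tau(q,s)}(q))=r+s(R-r)$, is then a continuous bijection between compact Hausdorff spaces, hence a homeomorphism.

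The main obstacle is the construction of the pseudo-gradient flow in the absence of a smooth structure: minimal geodesics toward $x_i$ need not be unique and directions at a point do not form a vector space, so the ``averaging'' step must be done with care. This is precisely the content of the quasigeodesic / admissible flow technology developed in \cite{Pr:alex2}, which I would cite rather than redevelop, since the rest of the argument then follows the classical Grove--Shiohama scheme of \cite{GS:sphere}.
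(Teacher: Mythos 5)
You should first note that the paper does not prove Lemma \ref{lem:iso} at all: it is quoted from \cite{GS:sphere}, \cite{Gr:betti} and \cite{Pr:alex2}, so the only fair comparison is with those standard proofs. Your outline is the classical Grove--Shiohama/Gromov scheme, and for Riemannian manifolds it is essentially correct (modulo the minor point that the averaged pseudo-gradient field is only continuous, so one smooths it before integrating to get a flow of diffeomorphisms with unique integral curves). But in this paper $M$ is an Alexandrov space, and in that setting your central step contains a genuine gap.

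The gap is the construction of the flow $\Phi_t$ and the subsequent bijectivity claim. In an Alexandrov space one cannot ``average the unit vectors toward the $x_i$'' with a partition of unity: the space of directions $\Sigma_{q'}$ is not a vector space, minimal geodesics are not unique, and the substitutes you invoke (quasigeodesics, or the Perelman--Petrunin gradient flow of a semiconcave combination of distance functions) yield only a \emph{semiflow}: forward trajectories exist and are unique, but distinct trajectories can merge, the time-$t$ maps need not be injective, and they are not homeomorphisms. Consequently your assertions that every point of $A(p;r,R)$ lies on exactly one trajectory and that $(q,s)\mapsto\Phi_{\tau(q,s)}(q)$ is a continuous bijection do not follow; this is precisely where the flow argument breaks down without a smooth structure. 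The actual proof in the Alexandrov case is not a flow argument at all: it is Perelman's fibration/stability machinery of \cite{Pr:alex2} (see also \cite{Kp:stability}), in which one shows, by induction on dimension using strainers and MCS structures, that an admissible map without critical points --- here $d_p$ restricted to a neighborhood of the annulus --- is a locally trivial fibration; the homeomorphism $A(p;r,R)\cong\partial B(p,r)\times[0,1]$ then comes from local triviality over the connected interval $[r,R]$. So citing \cite{Pr:alex2} for the ``averaging step'' does not patch your argument: the technology there replaces the pseudo-gradient scheme rather than completing it. As written, your proof is acceptable only as a proof of the Riemannian special case.
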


\begin{thm}[\cite{Pr:alex2}, cf. \cite{Kp:stability}] \label{thm:stab}
Let an infinite sequence $(M_i,p_i)$ in $\mathcal A_p(n)$ 
converge to a space $(M, p)$ in $\mathcal A_p(n)$  
with respect to the pointed  
Gromov-Hausdorff topology.  Take an $r>0$ such that 
there are no critical points of $d_p$ on 
$B(p, r)-\{ p\}$.  Then $B(p_i, r)$ is homeomorphic to both $B(p,r)$
and $K_p$ for large $i$.
\end{thm}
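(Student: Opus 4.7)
The plan is to prove the two asserted homeomorphisms in sequence: first $B(p,r)\cong K_p$ using only the non-critical hypothesis on the limit $M$, and then $B(p_i,r)\cong B(p,r)$ for large $i$ by transporting this structure via the Gromov-Hausdorff approximations.

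For the homeomorphism $B(p,r)\cong K_p$, I would combine Lemma \ref{lem:iso} with the tangent cone convergence $(\frac{1}{\rho}M,p)\to(K_p,o_p)$. For any $0<\rho<r$ the hypothesis yields, via Lemma \ref{lem:iso}, a homeomorphism $A(p;\rho,r)\cong\partial B(p,\rho)\times[0,1]$. On the cone side, $K_p$ is a Euclidean cone over $\Sigma_p$, so the distance function from $o_p$ has no critical points away from $o_p$; the closed ball of radius $\rho$ in $K_p$ is manifestly a cone neighborhood of the vertex. Choosing $\rho$ small enough that the rescaling $(\frac{1}{\rho}M,p)$ is close to $(K_p,o_p)$, one identifies $B(p,\rho)$ with such a cone neighborhood (this is itself a germ version of the stability assertion), then extends across the annulus by its product structure to obtain $B(p,r)\cong K_p$.

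For the stability part, the first ingredient is that the non-critical condition is \emph{quantitatively} preserved under pointed Gromov-Hausdorff convergence. By compactness of $A(p;\rho,r)$ and continuity of comparison angles, there exists a uniform $\delta>0$ such that every $q\in A(p;\rho,r)$ admits a point $x\in M$ with $\tilde\angle pqx>\pi/2+\delta$. Since angles and distances are almost preserved under $(M_i,p_i)\to(M,p)$, for all large $i$ the function $d_{p_i}$ has no critical points on the corresponding annulus in $M_i$. Hence Lemma \ref{lem:iso} applies equally to $M_i$, producing a product structure $A(p_i;\rho,r)\cong\partial B(p_i,\rho)\times[0,1]$.

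The genuine difficulty is to match these pieces into an actual homeomorphism $B(p_i,r)\to B(p,r)$, rather than merely to establish agreement of some topological invariants. Following Perelman's strategy, around each point of $M$ one constructs an \emph{admissible} (strictly concave) function, built from suitable averages of distance functions to a dense net, whose gradient flow provides a local product trivialization. Strict concavity is an open condition preserved under small metric perturbations, so the same formulas define admissible functions on $M_i$ for large $i$, with gradient flows compatible via the Gromov-Hausdorff approximations. Gluing the local homeomorphisms through a partition of unity and contracting along these flows yields the desired global homeomorphism. The main obstacle throughout is this gluing: because of Alexandrov singularities one cannot invoke classical smooth flow theory, and one must verify by hand that a multivalued gradient flow assembles into a single-valued topological homeomorphism. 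This is the technical heart of the Perelman stability theorem.
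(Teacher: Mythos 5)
The paper contains no proof of Theorem \ref{thm:stab}: it is quoted as background from Perelman \cite{Pr:alex2} (cf.\ Kapovitch \cite{Kp:stability}) and used as a black box, so there is no internal argument to measure your attempt against. Judged on its own terms, your outline correctly identifies the standard reduction: lifting the non-critical condition from the limit to $M_i$ by transplanting, for each $q$ in the annulus, a witness point $x$ with $\tilde\angle pqx>\pi/2$ through the Gromov--Hausdorff approximation (with a little care when $d(q,x)$ is small, where comparison angles become unstable), then using Lemma \ref{lem:iso} to reduce everything to small balls, and finally invoking stability of small balls to fix the homeomorphism type.

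However, as a proof it has a genuine gap, which you partly acknowledge: the two decisive steps are precisely the content of the theorem being cited. The identification of $B(p,\rho)$ with a cone neighborhood of $o_p\in K_p$ --- which you flag as ``a germ version of the stability assertion'' --- is Perelman's conical-neighborhood theorem, itself obtained by applying the stability theorem to the non-collapsing rescaled sequence $(\frac{1}{\rho}M,p)\to(K_p,o_p)$; and the assembly of the local product trivializations coming from strictly concave admissible functions into an actual homeomorphism $B(p_i,r)\to B(p,r)$ requires Perelman's MCS-structure theory together with the controlled gluing technology (deformation of homeomorphisms in the sense of Siebenmann), which you name but do not carry out. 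So your text is an accurate description of how the statement follows from the stability machinery --- which is exactly how the paper treats it, namely as a citation --- but it is not a self-contained proof, and if presented as one the circularity in the first step and the unproved gluing step would both have to be closed by reproducing the arguments of \cite{Pr:alex2} or \cite{Kp:stability}.
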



\section{Rescaling metrics} \label{sec:rescale}

Let a sequence $(M_i,p_i)$ in $\mathcal A_p(n)$ converge to a 
pointed Alexandrov space $(X,p)$ with curvature $\ge -1$
with respect to the pointed Gromov-Hausdorff topology.
It is a fundamental problem to find topological relation 
between $B(p_i,r)$ and $B(p,r)$ for a small but fixed 
positive number $r$ and large $i$. 

In the case when
$\dim X=n$, take $r>0$ so that the distance
function $d_p$ has no critical points on $B(p,r)-\{ p\}$.
Then Theorem \ref{thm:stab} shows that $B(p_i,r)$ is homeomorphic to 
$B(p,r)$ for large $i$.

In this section, from now on, we consider the collapsing  case when 
$1\le \dim X \le n-1$.
Since we are concerned with the topology of a neighborhood 
of $p_i$, we may assume 

\begin{asmp} \label{asmp:not-disk}
$B(\tilde p_i,r)$ is not homeomorphic to an $n$-disk for any 
$\tilde p_i$ with $d(p_i,\tilde p_i)\to 0$  and for any sufficiently 
large $i$.
\end{asmp}

The following is a generalization of the Key lemma 3.6
in \cite{SY:3mfd} and Theorem 4.1 in \cite{Ym:4mfd}.

\begin{thm} \label{thm:rescal}
Under Assumption \ref{asmp:not-disk}, 
there exist $\hat p_i\in B(p_i,r)$ and a sequences $\delta_i\to 0$ 
such that 
\begin{enumerate}
  \item $d(\hat p_i, p_i)\to 0;$
  \item  $d_{\hat p_i}$ has no critical points on 
   $A(\hat p_i;R\delta_i, r)$ for every $R\ge 1$ and large $i$ 
   compared to $R$. In particular, 
   $B(\hat p_i,r)$ is homeomorphic to $B(\hat p_i,R\delta_i);$ 
  \item  for any limit $(Y,y_0)$ of $(\frac{1}{\delta_i}M_i, \hat p_i)$, 
     we have  $\dim Y\ge \dim X + 1$.
 \end{enumerate}
\end{thm}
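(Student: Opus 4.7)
The plan is to construct $\delta_i$ as the \emph{fiber scale} of the collapse $(M_i, p_i) \to (X, p)$: the largest scale at which rescaling reveals geometric structure of dimension strictly higher than $\dim X$. My approach parallels the cases $\dim M = 3, 4$ treated in \cite{SY:3mfd} and \cite{Ym:4mfd}, combining a minimality principle for $\delta_i$ with Assumption \ref{asmp:not-disk} and the stability theorem (Theorem \ref{thm:stab}).

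First I would verify existence of some $\delta_i \to 0$ satisfying property (3). This uses Theorem \ref{thm:precpt} together with the fact that $\dim K_{p_i}(M_i) = n > \dim X$: as $\delta \to 0$ at fixed $i$, the rescalings $(\tfrac{1}{\delta} M_i, p_i)$ converge to $K_{p_i}$ of dimension $n$, while at scales $\delta = O(1)$ they converge (as $i \to \infty$) to $X$ of dimension $\dim X$, so a diagonal argument extracts an intermediate sequence $\delta_i \to 0$ whose rescaled limit $Y$ has $\dim Y > \dim X$. Next, I would refine the pair $(\hat p_i, \delta_i)$ by an extremal principle: among pairs $(\hat p, \delta)$ with $\hat p \in B(p_i, \epsilon_i)$ for some $\epsilon_i \to 0$ whose rescaled limit has dimension $> \dim X$, I would minimize $\delta$ up to a vanishing error. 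Properties (1) and (3) are then built in by construction.

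Property (2) is the crucial step. Suppose for contradiction that for some fixed $R \ge 1$ and infinitely many $i$, $d_{\hat p_i}$ admits a critical point $q_i$ with $R\delta_i \le d(\hat p_i, q_i) \le r$; set $s_i := d(\hat p_i, q_i)$. Passing to a subsequence, the rescaled sequence $(\tfrac{1}{s_i} M_i, \hat p_i)$ converges to some $(Z, z_0)$, with $q_i$ converging to a critical point of $d_{z_0}$ at distance $1$ in $Z$. The minimality of $\delta_i$ forces $\dim Z = \dim X$, so $(\tfrac{1}{s_i} M_i, \hat p_i) \to Z$ is a further collapse; combined with Lemma \ref{lem:iso} applied in the annulus beyond $s_i$ and with Assumption \ref{asmp:not-disk}, this should produce a contradiction by identifying $B(\hat p_i, r)$ topologically with a model ball in the collapsed limit. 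The main obstacle I anticipate is making this bootstrap work uniformly across all $R \ge 1$: a critical point hovering at an intermediate scale $s_i$ with $\delta_i \ll s_i \ll r$ is invisible both in the original limit $X$ and in the rescaled limit $Y$ at scale $1/\delta_i$, so the argument must chain rescalings through all intermediate scales and control the interplay between collapsing dimensions and critical-point structure — an interplay for which the low-dimensional topological inputs of \cite{SY:3mfd, Ym:4mfd} do not immediately generalize, and where the technical heart of the theorem lies.
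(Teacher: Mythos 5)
Your proposal inverts the logical structure of the theorem and leaves the hard part unproved. You define $\delta_i$ by an extremal ``fiber scale'' principle so that (3) holds by fiat, and then try to recover (2) by contradiction; but the contradiction is never derived. Even granting that a critical point $q_i$ at an intermediate scale $s_i\in[R\delta_i,r]$ yields a rescaled limit $(Z,z_0)$ with a critical point of $d_{z_0}$ at distance $1$, nothing contradictory follows: if you \emph{minimize} $\delta$ (as you write), minimality constrains smaller scales and says nothing about the larger scale $s_i$, so you cannot conclude $\dim Z=\dim X$; and even with $\dim Z=\dim X$, the presence of a critical point at distance $1$ in a collapsed limit is not by itself incompatible with Assumption \ref{asmp:not-disk} or Lemma \ref{lem:iso}. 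You acknowledge this yourself (``should produce a contradiction,'' ``where the technical heart of the theorem lies''), so the proposal does not contain a proof of (2). There is also a problem at the very first step: your diagonal argument for the existence of a dimension-jumping scale at the basepoint $p_i$ fails, because the limits you produce are limits of the tangent cones $K_{p_i}$, and these cones can themselves collapse as $i\to\infty$ (e.g.\ a sequence of cone points with cone angle tending to $0$ over a space collapsing to an interval), so the resulting $Y$ may have $\dim Y\le\dim X$. The need to \emph{move} the basepoint to a well-chosen $\hat p_i$ is exactly the subtle point, and your scheme gives no mechanism for choosing it; moreover ``the dimension of the limit'' is a property of a sequence (and subsequence), not of a single pair $(\hat p,\delta)$, so your per-$i$ extremal choice is not well defined without a quantitative surrogate.

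For comparison, the paper's proof makes (2) trivial and (3) the real work, the opposite of your split. It chooses $\hat p_i$ as a near-maximum point on $B(p_i,r/3)$ of the function $f^i=\min_\alpha f^i_\alpha$, where $f^i_\alpha$ is the average of distances to an $\epsilon r$-net $\{x^i_{\alpha\beta}\}_\beta$ transplanted from $\partial B(p,r)\subset X$ (Lemma \ref{lem:str-max} guarantees the limit $f$ has a strict maximum at $p$). Then $\delta_i$ is \emph{defined} as the largest distance from $\hat p_i$ to a critical point of $d_{\hat p_i}$ in $B(\hat p_i,r)$ (nonempty by Assumption \ref{asmp:not-disk}), so (2) follows immediately from Lemma \ref{lem:iso}, and $\delta_i\to0$ because $d_p$ has no critical points on $B(p,r)-\{p\}$. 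Property (3) is then proved in the blow-up limit $(Y,y_0)$: the rescaled functions $h^i=[f^i-f^i(\hat p_i)]/\delta_i$ converge to $h$ with a maximum at $y_0$, the critical point realizing $\delta_i$ limits to a critical point $z_0$ of $d_{y_0}$ at distance $1$, and the first-variation inequality $0\ge h'_{y_0}(v)$ forces the directions $v_{\alpha\beta}$ to lie exactly on $\partial B(v,\pi/2)\subset\Sigma_{y_0}$; comparing the packing lower bound \eqref{eq:net} in $\Sigma_p(X)$ with the packing upper bound \eqref{eq:net2} in $S^{\dim Y-2}(1)$ and letting $\epsilon\to0$ gives $\dim Y\ge\dim X+1$. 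If you want to salvage your route, you would need both a well-defined quantitative scale selection and a genuinely new argument for (2); as written, the proposal identifies the difficulty but does not resolve it.
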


The essential idea of the proof of Theorem \ref{thm:rescal} is the
same  as in \cite{SY:3mfd}. In \cite{SY:3mfd} however,
we had to suppose that the function $\hat f:K_p\to \R$ constructed there takes a strict 
local maximum at the vertex $o_p$ of $K_p$. 
Since this does not hold in general, we must modify the construction.
Some simplification of the proof  is also made here.
\par\medskip
For positive numbers $\theta$ and $\epsilon$ with 
$\epsilon\ll \theta\le\pi/100$,
take a positive number $r=r(p,\theta,\epsilon)$ such that 
\begin{enumerate}
\item $\angle xpy -\tilde\angle xpy < \epsilon$
      for every $x,y\in \partial B(p,2r);$
\item $\{ y_p'\}_{y\in\partial B(p,2r)}$ is $\epsilon$-dense in
  $\Sigma_p$.
\end{enumerate}
Note that the above $(2)$ implies that there are no critical points of
$d_p$ on $B(p,r)-\{ p\}$.
Let $\{ x_{\alpha}\}_{\alpha}$ be a $\theta r$-discrete maximal system in 
$\partial B(p,r)$.
For a small positive number $\epsilon$, take an $\epsilon r$-discrete 
maximal system $\{ x_{\alpha\beta}\}_{\beta}$, $1\le \beta\le N_{\alpha}$,
in $B(x_{\alpha},\theta r)\cap \partial B(p,r)$.
Let $\xi_{\alpha\beta}\in\Sigma_{p}$ be the direction of geodesic
$px_{\alpha\beta}$. Note that 
$\{ \xi_{\alpha\beta}\}_{\beta}$ is $\epsilon/2$-discrete. 
A standard covering argument implies that 
\begin{equation}
   N_{\alpha} \ge {\rm const}\left(\frac{\theta}{\epsilon}\right)^{\dim X -1}.
                   \label{eq:net}
\end{equation}

We consider the following functions $f_{\alpha}$ and $f$ on $M$:
\[
   f_{\alpha}(x) = \frac{1}{N_{\alpha}} \sum_{\beta=1}^{N_{\alpha}} 
                                d(x_{\alpha\beta}, x), \qquad
  f(x)= \min_{\alpha} f_{\alpha}(x).
\]
A similar construction was made in \cite{Kp:non-collapse}
to define a strictly concave function  on a neighborhood of a given 
point of an Alexandrov space.
The effectiveness of the use of those functions was suggested to the 
author by Vitali Kapovitch.

\begin{lem} \label{lem:str-max}
For every $x\in B(p,r/2)$, we have $f(x) \le r - d(p,x)/2$.
In particular, the restriction of $f$ to $B(p,r/2)$ has a strict maximum at $p$.
\end{lem}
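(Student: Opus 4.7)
The plan is to establish the quantitative bound $f(x) \le r - d(p,x)/2$ by, for each $x \in B(p, r/2)$, selecting a single index $\alpha$ so that every $x_{\alpha\beta}$ lies roughly in the direction from $p$ to $x$ as seen from $p$, and then bounding each distance $d(x_{\alpha\beta}, x)$ via Toponogov's hinge comparison. The case $x = p$ is immediate: each $d(x_{\alpha\beta}, p) = r$, so $f(p) = r$, with equality in the bound.

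For $x \ne p$, let $\rho := d(p,x) \in (0, r/2]$ and let $\xi \in \Sigma_p$ be the direction of $px$. I would first locate a suitable $\alpha$. By condition (2), there is $y \in \partial B(p, 2r)$ with $\angle(y'_p, \xi) < \epsilon$; the midpoint $y_0 \in \partial B(p, r)$ of the geodesic $py$ has direction $(y_0)'_p = y'_p$. By the $\theta r$-maximality of $\{x_\alpha\}$, pick $\alpha$ with $d(y_0, x_\alpha) \le \theta r$. The choice of $r = r(p,\theta,\epsilon)$ through (1) and (2) makes $B(p,r)$ sufficiently close to the Euclidean cone $K_p$ that a metric separation $\le \theta r$ on $\partial B(p, r)$ translates into an angular separation $\le \theta + o(1)$ at $p$ (with $o(1)\to 0$ as $r,\epsilon\to 0$). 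Thus $\angle(\xi_\alpha, \xi) \le \theta + O(\epsilon) + o(1)$, and applying the same reasoning to $x_{\alpha\beta} \in B(x_\alpha, \theta r) \cap \partial B(p, r)$ yields $\angle(\xi_{\alpha\beta}, \xi_\alpha) \le \theta + o(1)$. Together, the hinge angle $\phi_\beta := \angle x_{\alpha\beta} p x \le 2\theta + o(1)$.

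Next I would apply Toponogov's hinge comparison (curvature $\ge -1$) to the hinge at $p$ with legs $r$ and $\rho$ and angle $\phi_\beta$:
\[
\cosh d(x_{\alpha\beta}, x) \;\le\; \cosh r \cosh \rho - \sinh r \sinh \rho \cos \phi_\beta \;=\; \cosh(r - \rho) + \sinh r \sinh \rho\,(1 - \cos \phi_\beta).
\]
Writing $d(x_{\alpha\beta}, x) = (r - \rho) + \eta_\beta$, Taylor-expanding $\cosh$ about $r - \rho$, and using $\rho \le r/2$ so $r - \rho \ge r/2$, one obtains
\[
\eta_\beta \;\le\; \frac{\sinh r \sinh \rho}{\sinh(r - \rho)}\,(1 - \cos \phi_\beta) \;\le\; \rho\,\phi_\beta^2\,(1 + o(1)).
\]
Since $\theta \le \pi/100$, the quantity $\phi_\beta^2 \le (2\theta + o(1))^2$ is much smaller than $1/2$, so $\eta_\beta \le \rho/2$ for sufficiently small $r$ and $\epsilon$, giving $d(x_{\alpha\beta}, x) \le r - \rho/2$ for every $\beta$. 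Averaging over $\beta$ yields $f_\alpha(x) \le r - \rho/2$, hence $f(x) \le r - d(p,x)/2$. The strict-maximum statement at $p$ is then immediate, since $r - d(p,x)/2 < r = f(p)$ for $x \ne p$.

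The main subtlety will be the angular-alignment step: converting a metric separation $\le \theta r$ on $\partial B(p, r)$ into an angular separation at $p$ only slightly larger than $\theta$. This is exactly where the careful choice of $r = r(p,\theta,\epsilon)$ and hypotheses (1) and (2) are essential; without them, $\phi_\beta$ could be too large and the Toponogov hinge estimate would not deliver $\phi_\beta^2 \ll 1$.
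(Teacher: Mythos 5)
Your proposal is correct and follows essentially the same route as the paper: choose $\alpha$ so that all $x_{\alpha\beta}$ point (as seen from $p$) nearly in the direction of $x$, bound the hinge angle $\angle x_{\alpha\beta}px$ by a small multiple of $\theta$ (which, as you rightly flag, uses the near-equality $\angle\le\tilde\angle+\epsilon$ at the scale $r$, exactly as the paper does implicitly when asserting $\angle xpx_{\alpha\beta}<5\theta$), and then let comparison geometry force $d(x_{\alpha\beta},x)\le r-d(p,x)/2$ before averaging over $\beta$. The only cosmetic difference is the final step, where the paper bounds $\angle x\gamma(t)x_{\alpha\beta}<\pi/4$ along the geodesic $px$ and integrates the first variation inequality, while you apply the hinge version of Toponogov at $p$ and estimate the hyperbolic law of cosines directly; both yield the same linear decay estimate.
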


\begin{proof}
Take $y\in\partial B(p,r)$ and $x_{\alpha}$ with $\angle xpy <\epsilon$
and $d(y,x_{\alpha})<\theta r$.
It follows that $\angle xpx_{\alpha\beta} < 5\theta$.
Let $\gamma:[0,d]\to X$ be a minimal geodesic joining $p$ to $x$.
By the curvature assumption with trigonometry, we see that
$\angle x\gamma(t)x_{\alpha\beta} < \pi/4$. The first variation formula then 
implies that $d(x_{\alpha\beta}, x) \le r - d(p,x)/2$, and therefore
$f(x)\le f_{\alpha}(x) \le r - d(p,x)/2$.
\end{proof}

\begin{proof}[Proof of Theorem \ref{thm:rescal}]
Take  a $\mu_i$-approximation 
\[
    \phi_i : B(p,1/\mu_i)\to B(p_i,1/\mu_i),
\]
with $\phi_i(p)=p_i$, where $\mu_i\to 0$ as $i \to \infty$.  
Let $x_{\alpha\beta}^i:=\phi_i(x_{\alpha\beta})$, and 
define the functions $f_{\alpha}^i$ and $f^i$ on $M_i$ 
by
\[
    f_{\alpha}^i(x) = \frac{1}{N_{\alpha}} \sum_{\beta=1}^{N_{\alpha}} 
                                d(x_{\alpha\beta}^i, x), \qquad
  f^i(x)= \min_{\alpha} f_{\alpha}^i(x).
\]
Note that $f_{\alpha}^i\circ \phi_i \to f_{\alpha}$ and
$f^i\circ \phi_i \to f$.
By Lemma \ref{lem:str-max}, there is a point 
$\hat p_i\in B(p_i,r/2)$ such that 
\begin{enumerate}
 \item $(p_i,\hat p_i)\to 0;$
 \item the restriction of $f^i$ to $B(p_i,r/3)$ takes a maximum at 
       $\hat p_i$.
\end{enumerate}
Consider the distance function $d_{\hat p_i}$. 
By Assumption \ref{asmp:not-disk}, there is a critical point of 
$d_{\hat p_i}$ in $B(\hat p_i,r)$.  Let 
$\delta_i$
be the maximum distance between $\hat p_i$ and
the critical point set of $d_{\hat p_i}$ within  $B(\hat p_i,r)$.
Note that $\delta_i\to 0$.
Let $\hat q_i$ be a critical point of $d_{\hat p_i}$ within
$B(\hat p_i,r)$ realizing $\delta_i$.
We may assume that 
$(\frac{1}{\delta_i} M_i, \hat p_i)$ converges to a complete
noncompact pointed Alexandrov space $(Y,y_0)$ with nonnegative 
curvature. Let $z_0\in Y$ be the 
limit of $\hat q_i$ under this convergence.
We denote by $\hat d =\frac{1}{\delta_i}d$ the distance of 
$\frac{1}{\delta_i} M_i$. Consider the function 
\[
  h_{\alpha\beta}^i(x):=
   \hat d(x_{\alpha\beta}^i,x)-\hat d(x_{\alpha\beta}^i,\hat p_i),
\]
which is $1$-Lipschitz, and bounded on every bounded set.
Therefore passing to a subsequence, we may assume that
$h_{\alpha\beta}$ converges to a $1$-Lipschitz function 
$h_{\alpha\beta}$ on $Y$.



Let 

\begin{align*}
   h_{\alpha} = \frac{1}{N_{\alpha}} \sum_{\beta=1}^{N_{\alpha}} 
                               h_{\alpha\beta}, & \qquad
   h = \min_{\alpha} h_{\alpha}, \\
   h_{\alpha}^i = \frac{1}{N_{\alpha}} \sum_{\beta=1}^{N_{\alpha}} 
                               h_{\alpha\beta}^i, & \qquad
   h^i = \min_{\alpha} h_{\alpha}^i.
\end{align*}
Since $h^i=[f^i - f^i(\hat p_i)]/\delta_i$, 
$h$ takes a maximum at $y_0$.
Let $x_{\alpha\beta}(\infty)$  denote the the element of the ideal 
boundary $Y(\infty)$ of $Y$ 
defined by the limit ray, say $\gamma_{\alpha\beta}$, from $y_0$ 
of the geodesic $\hat p_i x_{\alpha\beta}^i$
under the convergence 
$(\frac{1}{\delta_i} M_i, \hat p_i)\to (Y,y_0)$. 
Let $v_{\alpha\beta}\in\Sigma_{y_0}$  and $v\in\Sigma_{y_0}$ denote the direction 
of $\gamma_{\alpha\beta}$ and $yz$ respectively.
Since $z_0$ is a critical point of $d_{y_0}$,
we have 
$\tilde\angle y_0z_0x_{\alpha\beta}(\infty)\le\pi/2$.
Since $Y$ has nonnegative curvature, it follows that
$\angle(v, v_{\alpha\beta}) \ge 
       \tilde\angle z_0 y_0 x_{\alpha\beta}(\infty)\ge\pi/2$,
for every $\alpha$ and $\beta$.
Choosing $\alpha$ with 
$h_{y_0}'(v)=(h_{\alpha})_{y_0}'(v)$, 
we obtain
\[
0  \ge h_{y_0}'(v)=\frac{1}{N_{\alpha}} \sum_{\beta=1}^{N_{\alpha}} 
                   - \cos \angle(v, v_{\alpha\beta}),
\]
and therefore $\angle(v, v_{\alpha\beta}) = \pi/2$.
Since 
\begin{align*}
 \angle(v_{\alpha\beta}, v_{\alpha\beta'}) 
     & = \lim_{t\to 0}\tilde\angle\gamma_{\alpha\beta}(t) y_0
                                      \gamma_{\alpha\beta'}(t) \\
     & \ge\tilde\angle x_{\alpha\beta}p x_{\alpha\beta'} \\
     & \ge \epsilon/2,
\end{align*}
for every $1\le \beta\neq\beta'\le N_{\alpha}$,
$\{ v_{\alpha\beta}\}_{\beta=1}^{N_{\alpha}}$ is 
$\epsilon/2$-discrete in $\partial B(v,\pi/2)\subset\Sigma_{y_0}$.
Since $\Sigma_{y_0}$ has curvature $\ge 1$, 
there is an expanding map from $\partial B(v,\pi/2)$ to the unit  sphere
$S^{\dim Y-2}(1)$.  
It follows that
\begin{equation}
    N_{\alpha} \le \text{\rm const}\,\epsilon^{-(\dim Y - 2)}.
        \label{eq:net2}
\end{equation}
Since this holds for any sufficiently small $\epsilon$, 
from \eqref{eq:net} and  \eqref{eq:net2} we can conclude $\dim Y\ge \dim X + 1$.
This completes the proof of Theorem \ref{thm:rescal}.
\end{proof}


\section{Isotopy covering systems and essential coverings}\label{sec:cover}

Let $M$ be a compact $n$-dimensional Alexandrov 
space with curvature bounded below.
For an open metric ball $B$ of $M$, we denote by $\lambda B$ the concentric 
ball of radius $\lambda r$.  We call a concentric ball $\hat B\subset B$ 
an {\it isotopic subball} of $B$ if there is a homeomorphism 
$M\to M$ sending $\hat B$ onto $B$ and leaving the outside of 
a neighborhood of $\bar B$ fixed.
For instance, this is the case when $d_p$ has no critical points 
on $\bar B - \hat B$ (Lemma\ref{lem:iso}).

Consider the following system 
$\mathcal B =\{ B_{\alpha_1\cdots\alpha_k}\}$
consisting of open metric balls $B_{\alpha_1\cdots\alpha_k}$  of $M$,
where the indices $\alpha_1, \ldots, \alpha_k$ range over
\begin{align*}
  1\le\alpha_1\le N_1, & \quad  1\le\alpha_2\le N_2(\alpha_1),\\
   1\le\alpha_k\le & N_k(\alpha_1\cdots\alpha_{k-1}),
\end{align*}
and 
$1\le k\le \ell$ for some $\ell$ depending on the choice of
the indices $\alpha_1,\alpha_2,\ldots$. 
Note that the range of $\alpha_k$ also depends on 
$\alpha_1\cdots\alpha_{k-1}$.
Let $A$ be the set  of all multi-indices
$\alpha=\alpha_1\cdots\alpha_{k}$
such that $B_{\alpha_1\cdots\alpha_{k}}\in\mathcal B$.
For each $\alpha=\alpha_1\cdots\alpha_{k}\in A$, put
$|\alpha|:= k$ and call it the length of $\alpha$.

Let $X$ be a subset of $M$. 
We call $\mathcal B$ an {\it isotopy covering system} 
of  $X$ if it satisfies the following:
\begin{enumerate}
 \item  $\{ B_{\alpha_1}\}_{\alpha_1=1}^{N_1}$ covers  $X;$
 \item $B_{\alpha_1\cdots\alpha_{k-1}} \supset 
                   B_{\alpha_1\cdots\alpha_{k}};$
 \item $\{ B_{\alpha_1\cdots\alpha_k}\}_{\alpha_k=1}^
                {N_k(\alpha_1\cdots\alpha_{k-1})}$ is a covering of 
   an isotopic subball  $\hat B_{\alpha_1\cdots\alpha_{k-1}}$ of 
  $B_{\alpha_1\cdots\alpha_{k-1}};$
 \item there is a uniform bound $d$ such that $|\alpha|\le d$ 
  for all $\alpha\in A$.
\end{enumerate}
We call $N_1$ the {\it first degree} of the system $\mathcal B$, and 
$N_k(\alpha_1\cdots\alpha_{k-1})$ the {\it $k$-th degree} of $\mathcal B$
with respect to $\alpha_1\cdots\alpha_{k-1}$.

Let $\hat A$ be the set  of all maximal multi-indices
$\alpha_1\cdots\alpha_{\ell}$  in $A$ in the 
sense that there are no  $\alpha_{\ell+1}$ with 
$B_{\alpha_1\cdots\alpha_{\ell}\alpha_{\ell+1}}\in\mathcal B$.
Then $\mathcal U:=\{ B_{\alpha}\}_{\alpha\in \hat A}$ is called 
an {\it essential covering} of $B$. 
In other words, $\mathcal U$ is the collection of the metric balls
lying on the bottom of the system $\mathcal B$.

We show that the essential covering 
$\mathcal U=\{ B_{\alpha}\}_{\alpha\in \hat A}$
produces a covering 
$\tilde {\mathcal U}=\{ \tilde B_{\alpha}\}_{\alpha\in \hat A}$ of $X$
such that $\tilde B_{\alpha}$ is homeomorphic (actually isotopic)
to  $B_{\alpha}$.
Let 
$h_{\alpha_1\cdots\alpha_{k-1}}: M \to M$ be a homeomorphism 
sending  $\hat B_{\alpha_1\cdots\alpha_{k-1}}$ onto
$B_{\alpha_1\cdots\alpha_{k-1}}$
and leaving the outside of 
a neighborhood of $\bar B_{\alpha_1\cdots\alpha_{k-1}}$ fixed.
For each $\alpha=\alpha_1\cdots\alpha_{\ell}\in \hat A$, consider 
the open set
\[
\tilde B_{\alpha} := 
   h_{\alpha_1}\circ h_{\alpha_1\alpha_2}\circ\cdots\circ
         h_{\alpha_1\cdots\alpha_{\ell-1}}(B_{\alpha}).
\]
For each $1\le\alpha_1\le N_1$, let $A(\alpha_1)$ be the set of 
all multi-indices $\alpha\in A$  of the forms 
$\alpha=\alpha_1\cdots\alpha_k$ whose leading term is equal to 
$\alpha_1$ and $k\ge 2$.
From construction, we have 
\[
   B_{\alpha_1}\subset \bigcup_{\alpha\in A(\alpha_1)}
                  \tilde B_{\alpha} 
\]
and therefore
$\tilde{\mathcal U}= \{ \tilde B_{\alpha}\}_{\alpha\in \hat A}$ provides a 
covering of $X$. 

We call 
\[
       d_0 := \max_{\alpha\in \hat A} |\alpha|
\]
the {\it depth} of both  $\mathcal B$ and 
$\mathcal U$. 
Note that if $d_0=1$, then  $\mathcal B=\mathcal U$ is a usual covering of $X$.

Let ${\mathcal C}(n)$ be the set of all isometry classes of 
the Euclidean cone over $(n-1)$-dimensional compact Alexandrov spaces 
with curvature $\ge 1$.
We say that $\mathcal B$ and $\mathcal U$
are {\it modeled} on ${\mathcal C}(n)$ if each $B_{\alpha}$ in
$\mathcal U$
is homeomorphic to a space in ${\mathcal C}(n)$.

For any positive integer $d$, 
we denote by $\tau_d(X)$ the minimal number of metric balls 
forming an essential covering $\mathcal U$  of $X$ with 
depth $\le d$ modeled on 
$\mathcal C(n)$.
Note that $\tau_{d_1}(B)\ge \tau_{d_2}(B)$ if $d_1\le d_2$.

 For open metric ball $B$ of $M$ having a proper isotopic subball, 
 we set
 \[
      \tau_d^*(B) = \min_{\hat B} \tau_{d}(\hat B),
 \]
 where $\hat B$ runs over all isotopic subballs of $B$.
If $B$ itself is homeomorphic to a space in $\mathcal C(n)$,
we define
\[
     \tau_0(B)=\tau_0^*(B)=1.
\]

From definition, we immediately have

\begin{lem} \label{lem:tau}
Suppose that $X$ is covered by metric balls
$\{ B_{\alpha_1}\}_{\alpha_1=1}^{N_1}$ having proper isotopic subballs.
Then we have 
\[
    \tau_{d+1}(X) \le \sum_{\alpha_1=1}^{N_1} \tau_d^*(B_{\alpha_1}).
\]
\end{lem}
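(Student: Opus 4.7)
The plan is to build an essential covering of $X$ of depth $\le d+1$ with exactly $\sum_{\alpha_1} \tau_d^*(B_{\alpha_1})$ balls, by taking the given first-level covering and gluing beneath each $B_{\alpha_1}$ a minimal essential covering of its best isotopic subball.

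First, for each $\alpha_1\in\{1,\ldots,N_1\}$, I invoke the definition of $\tau_d^*(B_{\alpha_1})$ to choose an isotopic subball $\hat B_{\alpha_1}\subset B_{\alpha_1}$ and an isotopy covering system $\mathcal B^{\alpha_1}=\{B^{\alpha_1}_{\beta_1\cdots\beta_k}\}$ of $\hat B_{\alpha_1}$ of depth $\le d$, modeled on $\mathcal C(n)$, whose associated essential covering $\mathcal U^{\alpha_1}$ realizes the minimum $\tau_d(\hat B_{\alpha_1})=\tau_d^*(B_{\alpha_1})$. Then I form a new system $\mathcal B$ on $X$ by concatenating indices: the first level consists of the given $\{B_{\alpha_1}\}_{\alpha_1=1}^{N_1}$, and below each $B_{\alpha_1}$ I attach $\mathcal B^{\alpha_1}$ by relabeling every multi-index $\beta_1\cdots\beta_k$ of $\mathcal B^{\alpha_1}$ as $\alpha_1\beta_1\cdots\beta_k$, and declaring $B_{\alpha_1\beta_1\cdots\beta_k}:=B^{\alpha_1}_{\beta_1\cdots\beta_k}$.

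Next I check the four axioms for $\mathcal B$ to be an isotopy covering system of $X$. The first-level covering of $X$ is guaranteed by hypothesis. The nesting $B_{\alpha_1\cdots\alpha_{k-1}}\supset B_{\alpha_1\cdots\alpha_k}$ at levels $k\ge 3$ is inherited from $\mathcal B^{\alpha_1}$, and at the transition level $k=2$ it follows because the level-one balls of $\mathcal B^{\alpha_1}$ are contained in $\hat B_{\alpha_1}\subset B_{\alpha_1}$. The crucial isotopic-subball condition at the transition level is exactly the statement that $\{B_{\alpha_1\alpha_2}\}_{\alpha_2}$ covers $\hat B_{\alpha_1}$, which holds by our choice of $\mathcal B^{\alpha_1}$; at deeper levels it is inherited. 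Finally, $|\alpha|\le d+1$ for every multi-index of $\mathcal B$ since the multi-indices of $\mathcal B^{\alpha_1}$ have length at most $d$.

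To conclude, I observe that the maximal multi-indices of $\mathcal B$ are precisely the indices $\alpha_1\beta$ with $\beta$ maximal in $\mathcal B^{\alpha_1}$ (each $B_{\alpha_1}$ must be extended further, since $\hat B_{\alpha_1}$ is a proper subball). Therefore the essential covering $\mathcal U$ of $\mathcal B$ satisfies $|\mathcal U|=\sum_{\alpha_1=1}^{N_1}|\mathcal U^{\alpha_1}|=\sum_{\alpha_1=1}^{N_1}\tau_d^*(B_{\alpha_1})$, and each of its balls is homeomorphic to a space in $\mathcal C(n)$ because this property is inherited from the $\mathcal U^{\alpha_1}$. By definition of $\tau_{d+1}$, this yields $\tau_{d+1}(X)\le\sum_{\alpha_1=1}^{N_1}\tau_d^*(B_{\alpha_1})$. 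The argument is entirely bookkeeping from the definitions in Section~\ref{sec:cover}, so no real obstacle is expected; the only point deserving care is the transition level where the inner system $\mathcal B^{\alpha_1}$ is grafted below $B_{\alpha_1}$, which is handled precisely by having chosen $\hat B_{\alpha_1}$ as an isotopic subball realizing $\tau_d^*(B_{\alpha_1})$.
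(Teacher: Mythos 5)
Your overall route is exactly the intended one: the paper offers no written argument (the lemma is introduced with ``From definition, we immediately have''), and your grafting construction --- take the given first-level covering $\{B_{\alpha_1}\}$, choose for each $\alpha_1$ a minimizing isotopic subball $\hat B_{\alpha_1}$ with a minimizing depth-$\le d$ system $\mathcal B^{\alpha_1}$, relabel its multi-indices with the prefix $\alpha_1$, and count the bottom balls --- is the natural unwinding of the definitions in Section~\ref{sec:cover}.

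There is, however, one step whose justification as written does not hold: the verification of the nesting axiom (2) at the transition level $k=2$. You assert that ``the level-one balls of $\mathcal B^{\alpha_1}$ are contained in $\hat B_{\alpha_1}\subset B_{\alpha_1}$,'' but axiom (1) for an isotopy covering system of $\hat B_{\alpha_1}$ only requires its first-level balls to \emph{cover} $\hat B_{\alpha_1}$; nothing in the definition prevents them from being large balls that stick far outside $\hat B_{\alpha_1}$, and indeed outside $B_{\alpha_1}$. Since you must use the \emph{minimizing} subball and system to get the count $\tau_d^*(B_{\alpha_1})$, you cannot simply pick a more convenient system, and you cannot shrink the offending balls without destroying either the covering of $\hat B_{\alpha_1}$ or the containment of their own descendants. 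So, under the paper's definition, condition (2) for the grafted system $\mathcal B$ at the transition level is genuinely unverified in your write-up. To be fair, the paper glosses over the same point; in its actual use of the lemma (the induction in Lemma~\ref{lem:pt-cover}) the deeper balls occur at a much smaller scale $\sim\delta_j$ than the parent ball, so the containment is automatic there, and alternatively one can regard the transition-level containment as a harmless normalization to be imposed (or note that axiom (2) is not needed for the construction of the enlarged covering $\tilde{\mathcal U}$, only axiom (3) is). But as a proof of the lemma exactly as stated, this is the one point that needs an additional argument or an explicit convention, and your stated justification for it is not valid.
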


\begin{ex} \label{ex:torus}
For a positive number $\epsilon$,
let us consider the flat torus
\[
    T^n_{\epsilon}=S^1(1)\times S^1(\epsilon)\times S^1(\epsilon^2)
                    \times\cdots\times S^1(\epsilon^{n-1}).
\]
An obvious observation similar to that in the introduction shows 
$\tau_n(T^n)\le 2^n$. Note that  
$\lim_{\epsilon\to 0}\tau_d(T^n_{\epsilon})=\infty$ 
for every $1\le d\le n-1$.
\end{ex}

\begin{ex} \label{ex:nil}
Let $N$ be an $n$-dimensional  simply connected Lie group, and 
$\frak n$ its Lie algebra. Take a triangular basis 
$x_1,\ldots,x_n$ of $\frak n$ in the sense that
$[x_i, x]\in \frak n_{i-1}$ for every $x\in\ell$, where
$\frak n_{i-1}$ is spanned by $x_1,\ldots,x_{i-1}$. 
For $\epsilon>0$, put  
$\epsilon_i:=\epsilon^{n^{n-i}}$, and define the inner product on
$\frak n$ by
\[
    ||x||_{\epsilon}^2 = 
           \epsilon_1^2 a_1^2 + \cdots + \epsilon_n^2 a_n^2,
\]
for $x=\sum a_ix_i$. We equip $N$ the corresponding left invariant 
metric $g_{\epsilon}$.
For a given uniform discrete subgroup $\Gamma$ of $N$,
consider the quotient $M_{\epsilon}:=(\Gamma\backslash N, g_{\epsilon})$.
Note that the sectional curvature of $M_{\epsilon}$ is uniformly
bounded and 
$\delta_{\epsilon}^1:=\diam(M_{\epsilon}) \to 0$ as $\epsilon\to 0$
(see \cite{Gr:almost}). 
Now under the rescaling of metric  
$\frac{1}{\delta_{\epsilon}^1} M_{\epsilon}$
collapses to a circle. We then have a fibration
\[
      \Gamma_1\backslash N_1 \to M_{\epsilon} \to S^1,
\]
with a nilmanifold $\Gamma_1\backslash N_1$ as fiber.
Thus $M_{\epsilon}$ can be covered by two thin metric balls 
$B_{\alpha_1}$, $\alpha_1\in \{ 1, 2\}$,
each of which is homeomorphic to $\Gamma_1\backslash N_1\times [0,1]$.
Let $\delta_{\epsilon}^2:=\diam(\Gamma_1\backslash N_1)$.
Under the rescaling of metric  $\frac{1}{\delta_{\epsilon}^1} B_{\alpha_1}$
collapses to $S^1\times\R$. 
Now an isotopic subball $\hat B_{\alpha_1}$ 
of $B_{\alpha_1}$ has a fibration
\[
      \Gamma_2\backslash N_2\to \hat B_{\alpha_1} \to S^1\times [0,1],
\]
with a nilmanifold $\Gamma_2\backslash N_2$ as fiber.
Thus $\hat B_{\alpha_1}$ can be 
covered by two metric balls 
$B_{\alpha_1\alpha_2}$, $\alpha_2\in \{ 1, 2\}$,
each of which is homeomorphic to $\Gamma_2\backslash N_2\times [0,1]^2$.
Repeating this, we finally have $\tau_n(M_{\epsilon})\le 2^n$. Note that  
$\lim_{\epsilon\to 0}\tau_d(M_{\epsilon})=\infty$ 
for every $1\le d\le n-1$.
\end{ex}
 
Let $A(n)$ denote the set of all isometry classes of $n$-dimensional 
complete Alexandrov spaces with curvature $\ge -1$.
Theorem \ref{thm:cover} is an immediate consequence of 
the following 

\begin{thm} \label{thm:refine-cover}
For given $n$ and $D>0$, there are constants $C_n$ and $C_n(D)$ such 
that for every metric ball $B$ of radius $\le D$ in $M\in\mathcal A(n)$, 
there is an isotopy covering
system  $\mathcal B=\{ B_{\alpha_1\cdots\alpha_k}\}$
of $B$ with depth $\le n$  modeled on 
$\mathcal C(n)$ such that 
\begin{enumerate}
 \item the first degree  $\le C_n(D);$
 \item any other higher degree $\le C_n$.
\end{enumerate}
In  particular $\tau_n(B)\le C_n(D)(C_n)^{n-1}$.
\end{thm}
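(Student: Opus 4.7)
The plan is to argue by a compactness/contradiction scheme combined with an induction on $n - \dim X$, where $X$ is the pointed Gromov--Hausdorff limit of a hypothetical bad sequence. Suppose the theorem fails for some $n, D$; then there are sequences $M_i \in \mathcal{A}(n)$ with balls $B(p_i, r_i)$, $r_i \le D$, for which no isotopy covering system satisfying the stated degree bounds exists. Theorem \ref{thm:precpt} provides a subsequential pointed limit $(M_i, p_i) \to (X, p)$ with $X \in \mathcal{A}_p(n)$, and I would set $m := \dim X$, running the induction on the quantity $n - m$.

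For the base case $m = n$ (non-collapsed), I would cover $\bar B(p, r_i) \subset X$ by finitely many metric balls $\{B(q_\beta, s_\beta)\}_{\beta}$ chosen so that $d_{q_\beta}$ has no critical points on $B(q_\beta, s_\beta) - \{q_\beta\}$; the number of such balls is controlled by a Bishop--Gromov volume packing estimate depending only on $n$ and $D$. Lifting the centers to $q_\beta^i \in M_i$ through GH-approximations and applying Theorem \ref{thm:stab}, each $B(q_\beta^i, s_\beta)$ is homeomorphic to the tangent cone $K_{q_\beta}(X) \in \mathcal{C}(n)$. This yields, for large $i$, a depth-$1$ essential covering of $B(p_i, r_i)$ with first degree bounded by $C_n(D)$, contradicting the choice of counterexamples.

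For the inductive step $m < n$, I would cover a slightly larger ball around $p$ in $X$ by $N_1 \le C_n(D)$ balls $\{B(p_{\alpha_1}, r_0)\}$ of fixed small radius $r_0$ and lift them to $B_{\alpha_1}^i := B(p_{\alpha_1}^i, r_0) \subset M_i$. For each $\alpha_1$, if Assumption \ref{asmp:not-disk} fails at $p_{\alpha_1}^i$, then a nearby re-centering of $B_{\alpha_1}^i$ is an $n$-disk (hence in $\mathcal{C}(n)$) and that branch terminates. Otherwise, Theorem \ref{thm:rescal} produces $\hat p_{\alpha_1}^i$ and $\delta_{\alpha_1}^i \to 0$ such that $B(\hat p_{\alpha_1}^i, R\delta_{\alpha_1}^i)$ is an isotopic subball of $B_{\alpha_1}^i$ for any fixed $R \ge 1$ (via Lemma \ref{lem:iso}) and such that $(\frac{1}{\delta_{\alpha_1}^i} M_i, \hat p_{\alpha_1}^i)$ converges to a pointed Alexandrov space $(Y_1, y_0)$ of nonnegative curvature with $\dim Y_1 \ge m + 1$. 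The rescaled isotopic subball has fixed radius $R$ and sits inside an $n$-dimensional Alexandrov space of curvature $\ge -\delta_{\alpha_1}^i \to 0$, so the hypotheses of the theorem are again met at the next recursion level. Applying the inductive hypothesis (noting $n - \dim Y_1 < n - m$ and that the relevant radius $R$ is an absolute constant), one obtains for each $\alpha_1$ an isotopy covering system of the rescaled subball with all degrees bounded by $C_n$. Concatenating with the initial cover $\{B_{\alpha_1}^i\}$ yields the desired isotopy covering system of total depth $\le n$, first degree $\le C_n(D)$, and subsequent degrees $\le C_n$, contradicting the counterexample assumption.

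The main obstacle is the careful book-keeping of constants across the induction. At the recursive invocation the rescaled ``first degree'' must itself be absorbed into the $D$-independent constant $C_n$; this works because the rescaled ball has a universal radius $R$ (chosen as an absolute constant such as $R = 1$), so $C_n(R)$ is an absolute constant depending only on $n$. A secondary delicate point is that different $B_{\alpha_1}^i$ in the initial cover may collapse at different rates $\delta_{\alpha_1}^i$ to distinct limits of possibly distinct dimensions; the induction must be invoked branch-by-branch, and the resulting tree of indices may have varying lengths across its maximal branches, but this is precisely the flexibility built into the definition of an isotopy covering system, where $|\alpha|$ is allowed to vary across $\alpha \in \hat A$. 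A final technicality is verifying that the nested metric balls, with the isotopies furnished by Lemma \ref{lem:iso}, really assemble into a system satisfying clauses (1)--(4) of Section \ref{sec:cover}; one must use that $B(\hat p_{\alpha_1}^i, R\delta_{\alpha_1}^i)$ is isotopic to $B_{\alpha_1}^i$ despite the re-centering $\hat p_{\alpha_1}^i \neq p_{\alpha_1}^i$, which follows from clause (2) of Theorem \ref{thm:rescal} together with Lemma \ref{lem:iso}.
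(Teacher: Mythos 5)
Your skeleton---contradiction via precompactness (Theorem \ref{thm:precpt}), the stability theorem in the non-collapsed case, Theorem \ref{thm:rescal} to re-center, rescale and raise the dimension of the limit, and concatenation of systems, inducting on the dimension of the limit---is the paper's approach, which packages it as a sequence-based local lemma (Lemma \ref{lem:pt-cover}), proved by reverse induction on $\dim X$, followed by a short contradiction argument for the theorem itself. However, two of your steps do not work as stated. The radii at which Theorem \ref{thm:stab} or Theorem \ref{thm:rescal} can be applied depend on the base point and on the limit space and admit no uniform positive lower bound in terms of $n$ and $D$; consequently no Bishop--Gromov packing estimate bounds the number of such balls by a constant depending only on $(n,D)$, and in the collapsed case there is no covering by balls of a ``fixed small radius $r_0$'' with $N_1\le C_n(D)$. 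All the contradiction scheme needs, and all that is true, is that compactness of the closed limit ball yields a finite cover by balls of point-dependent radii whose cardinality depends on the limit (hence on the chosen subsequence); the uniform constants then emerge non-constructively from the contradiction, which is exactly why the paper's bounds are not explicit.

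Second, your invocation of ``the inductive hypothesis'' on the rescaled ball of radius $R$ is circular as written: the theorem carries no induction parameter, so you cannot cite it at $D=R$ inside its own proof. The quantity that decreases is $n-\dim(\text{limit})$, so the statement you induct on must be a statement about convergent sequences (the claim inside Lemma \ref{lem:pt-cover}); one applies it to $(\frac{1}{\delta_i}M_i, z^i)$ for each point $z$ of the new limit $Y_1$, uses compactness of $\bar B(y_0,R)\subset Y_1$ to extract finitely many such balls covering it after passing to a common subsequence, and then concatenates via Lemma \ref{lem:tau}. This pointwise application plus finite subcover is missing from your sketch, and it is also where the re-centering must be resolved: the balls entered into the system have to be centered at the re-centered points $\hat p_{\alpha_1}^i$ (they still cover $B_i$ for large $i$ because $d(p_{\alpha_1}^i,\hat p_{\alpha_1}^i)\to 0$ and the limit cover uses half-radii), since clause (2) of Theorem \ref{thm:rescal} makes $B(\hat p_{\alpha_1}^i,R\delta_{\alpha_1}^i)$ a concentric isotopic subball of $B(\hat p_{\alpha_1}^i,r)$ only, not of your ball $B(p_{\alpha_1}^i,r_0)$, and the definition of an isotopy covering system requires concentric subballs.
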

\bigskip
We first prove the local version of Theorem \ref{thm:refine-cover}.

\begin{lem} \label{lem:pt-cover}
There is a positive number $C_n$ satisfying the 
following: For  a given  infinite sequence $(M_i,p_i)$ in $\mathcal A_p(n)$
with  $\inf \diam(M_i) > 0$,
there is a subsequence $(M_j,p_j)$ for which 
we have  a positive number $r>0$ and $\hat p_j\in M_j$ with 
$d(p_j,\hat p_j)\to 0$ such that $\tau_{n-1}^*(B(\hat p_j, r))\le C_n$.
\end{lem}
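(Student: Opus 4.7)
The plan is to prove the lemma by induction on the codimension $k := n - \dim X$ of a pointed Gromov--Hausdorff limit $X$ of a subsequence of $(M_i, p_i)$, which exists by precompactness (Theorem \ref{thm:precpt}). Since $\inf \diam(M_i) > 0$, we have $\dim X \ge 1$ and $k$ ranges in $\{0, 1, \dots, n-1\}$. I would actually prove the slightly stronger statement that at codimension $k$ there exists $C_{n,k}$ with $\tau_k^*(B(\hat p_j, r)) \le C_{n,k}$. Since $\tau_d^*$ is nonincreasing in $d$, the lemma follows with $C_n := \max_{0 \le k \le n-1} C_{n,k}$, a constant depending only on $n$.

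The base case $k = 0$ is immediate from the stability theorem: pick $r>0$ so that $d_p$ has no critical points on $B(p,r) \setminus \{p\}$ and apply Theorem \ref{thm:stab} to get $B(p_j, r) \cong K_p \in \mathcal{C}(n)$, so $C_{n,0} = 1$ with $\hat p_j = p_j$. For $k \ge 1$, first dispose of the easy subcase where Assumption \ref{asmp:not-disk} fails: then some nearby $\tilde p_j \to p_j$ satisfies $B(\tilde p_j, r) \cong D^n \in \mathcal{C}(n)$, so $\tau_0^* = 1$. Otherwise apply Theorem \ref{thm:rescal} to produce $\hat p_i$ with $d(\hat p_i, p_i) \to 0$ and $\delta_i \to 0$ such that $B(\hat p_i, R \delta_i)$ is a concentric isotopic subball of $B(\hat p_i, r)$ for every $R \ge 1$, while the rescaled sequence $(\frac{1}{\delta_i} M_i, \hat p_i)$ subconverges to $(Y, y_0)$ with $\dim Y \ge n-k+1$. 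Hence $\tau_k^*(B(\hat p_i, r)) \le \tau_k(B^{\hat d}(\hat p_i, R))$ in the rescaled metric, and the problem reduces to essentially covering $B^{\hat d}(\hat p_i, R)$ for a universally chosen $R$.

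For the covering step, use that $Y$ has nonnegative curvature and dimension $\le n$: Bishop--Gromov packing yields a universal bound $N = N(n)$ on the number of balls of radius $s$ (also chosen universally) needed to cover $B(y_0, R) \subset Y$. Lift via Gromov--Hausdorff approximation to a cover $\{B^{\hat d}(q_\alpha^i, s)\}_{\alpha=1}^N$ of an inner isotopic subball of $B^{\hat d}(\hat p_i, R)$. For each $\alpha$, the translated sequence $(\frac{1}{\delta_i} M_i, q_\alpha^i) \to (Y, q_\alpha)$ has codimension $\le k-1$, so the inductive hypothesis produces a nearby point $\tilde q_\alpha^i$ and a radius $s_\alpha > 0$ with $\tau_{k-1}^*(B^{\hat d}(\tilde q_\alpha^i, s_\alpha)) \le C_{n,k-1}$. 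Replacing each covering ball by the inductively produced one and applying Lemma \ref{lem:tau} then gives $\tau_k \le N \cdot C_{n,k-1}$, closing the induction with $C_{n,k} := N \cdot C_{n,k-1}$.

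The main obstacle I anticipate is the scale and center mismatch: the inductively produced centers $\tilde q_\alpha^i$ differ from the first-level centers $q_\alpha^i$, and the radii $s_\alpha$ depend on the sub-subsequence. I would resolve this by exploiting that the shifts $d(\tilde q_\alpha^i, q_\alpha^i) \to 0$ (so for large $i$ they are of lower order than the relevant radii) and that only finitely many indices $\alpha$ occur at each level, so $\min_\alpha s_\alpha$ is a positive quantity that governs a bounded refinement of the first-level cover. Together with the universality of Bishop--Gromov packing in nonnegatively curved Alexandrov spaces of dimension $\le n$, this ensures that the cardinality of the cover at each level is bounded by a constant depending only on $n$, yielding the required universal $C_n$.
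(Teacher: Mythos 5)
Your overall skeleton (induction on the dimension of the limit, the rescaling theorem \ref{thm:rescal} to pass to $(\tfrac{1}{\delta_i}M_i,\hat p_i)\to (Y,y_0)$ with $\dim Y\ge\dim X+1$, the stability theorem for the top-dimensional case, and Lemma \ref{lem:tau} to assemble the cover) is the same as the paper's. But your attempt to make the constant universal at each inductive step has a genuine gap, precisely at the point you flag as the ``main obstacle.'' The Bishop--Gromov packing bound controls the number of balls of a \emph{universally chosen} radius $s$ needed to cover $B(y_0,R)\subset Y$, but those are not the balls you control: the inductive hypothesis only gives you, at each center, a ball $B^{\hat d}(\tilde q_\alpha^i,s_\alpha)$ whose radius $s_\alpha$ is produced by the induction and is therefore sequence-dependent, with no lower bound in terms of $n$ alone. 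If $\min_\alpha s_\alpha\ll s$, the $N(n)$ balls for which you have the $\tau_{k-1}^*$ bound need not cover the isotopic subball at all, and your proposed repair --- refining the cover at scale $\min_\alpha s_\alpha$ --- destroys universality twice over: the refinement requires on the order of $(s/\min_\alpha s_\alpha)^n$ balls, a quantity depending on the particular sequence, and the new centers created by the refinement carry no inductive control (the hypothesis was applied only at the original centers $q_\alpha$, and the radius it yields changes from point to point). So the recursion $C_{n,k}=N(n)\,C_{n,k-1}$ does not close, and no universal constant comes out of this argument as written.

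The paper sidesteps exactly this difficulty by running the entire induction inside a proof by contradiction: assuming the lemma fails, one gets a sequence along which $\tau_{n-1}^*(B(\hat p_j,r))\to\infty$ for every choice of nearby centers and radii, and the inductive claim only needs to produce a \emph{finite}, sequence-dependent bound $\sum_\alpha C_\alpha<\infty$ --- obtained by ordinary compactness of $\bar B(y_0,2)$, covering it by the inductively produced balls $B(z_\alpha,r_\alpha/2)$ at their own (non-universal) radii --- to contradict the assumed divergence. The universal $C_n$ then exists by the contradiction, which is also why the paper's bound is non-explicit. If you want to keep your direct formulation, you must either reorganize it as the paper does (contradiction plus compactness, giving up explicitness), or find a way to make the radius in the inductive statement uniform over all sequences with a given limit behavior, which is exactly what is not available; Bishop--Gromov alone cannot supply it.
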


\begin{proof}
We prove it by contradiction. If the conclusion does not hold, we 
would have an infinite sequence $(M_i,p_i)$ in $\mathcal A_p(n)$ such 
that for every $r>0$ and every $\hat p_i\in M_i$ with
$d(p_i,\hat p_i)\to 0$, we have $\tau_{n-1}^*(B(\hat p_j, r))\to \infty$
for any subsequence $\{ j\}$ of $\{ i\}$.
By Theorem \ref{thm:precpt}, we have a subsequence  $\{ j\}$ such that 
$(M_j,p_j)$ converges to a pointed space $(X,p)$.
Set $k=\dim X$.

We claim that  $\tau_{n-k}^*(B(\hat p_j, r))\le C$ for some $r>0$ and 
constant $C$ independent of $j$, where $\hat p_j$ is a point of $M_j$ with
$d(p_j,\hat p_j)\to 0$.
Since this is a contradiction, this will complete the proof.

We prove the claim  by the reverse induction on $k$.
If $k=n$, then Theorem \ref{thm:stab}
shows that there is an $r>0$ such that $B(p_j, r)$ is homeomorphic 
to  $K_p$, yielding $\tau_0^*(B(p_j, r))=1$.
Therefore together with the diameter assumption, we only have to 
investigate the case $1\le \dim X\le n-1$. 
Suppose the claim holds for $\dim X=k+1, \ldots, n$, and 
consider the case of $\dim X=k$.
Take $r=r_p>0$, $\hat p_j$ and $\delta_j\to 0$
as in Theorem \ref{thm:rescal}. 
Namely passing to a subsequence, we may assume that 
$(\frac{1}{\delta_j}M_j, \hat p_j)$
converges to a pointed complete noncompact nonnegatively curved 
space $(Y,y_0)$ with  $\dim Y\ge \dim X + 1$
such that $B(\hat p_j,R\delta_j)$ is an isotopic subball of 
$B(\hat p_j,r)$  for every $R\ge 1$ and large $j$ compared to $R$.
%
%
Applying the induction hypothesis to the convergence
$(\frac{1}{\delta_j}M_j, \hat p_j) \to (Y,y_0)$, we have the
following:
For each $z\in B(y_0, 2)$, there are  $z^j\in
(\frac{1}{\delta_j}M_j, \hat p_j)$  and $r_z>0$ such that 
$\tau_{n-k-1}^*(B(z^j, r_z;\frac{1}{\delta_j}M_j))\le C$ for some constant
$C$ independent of $j$.  
By compactness, there are finitely many points
$z_{\alpha}\in B(y_0, 2)$ and $z_{\alpha}^j\in M_j$
converging to $z_{\alpha}$ together with $r_{\alpha}>0$ such that
\[
    \bigcup B(z_{\alpha}, r_{\alpha}/2)\supset B(y_0, 2), \qquad
 \tau_{n-k-1}^*(B(z_{\alpha}^j, r_{\alpha};\frac{1}{\delta_j}M_j))
                \le C_{\alpha}.
\]
Note that 
$\cup B(z_{\alpha}^j, r_{\alpha};\frac{1}{\delta_j}M_j)\supset 
B(\hat p_j, 2;\frac{1}{\delta_j}M_j)$ for large $i$.
Thus  we can conclude  
\begin{align*}
   \tau_{n-k}^*(B(\hat p_j, r)) 
      & \le \tau_{n-k}(B(\hat p_j, 2\delta_j)) \\
      & \le \sum_{\alpha} \tau_{n-k-1}^*(B(\hat z_{\alpha}^j, r_{\alpha})) \\
      & \le  \sum C_{\alpha} <\infty.
\end{align*}
\end{proof}

\begin{proof}[Proof of Theorem \ref{thm:refine-cover}]
The proof is by contradiction.
If the conclusion does not hold, we would have an infinite
sequence of metric balls $B_i$ of spaces $M_i \in \mathcal A(n)$ such that 
for every essential covering system 
$\mathcal B^i$ of $B_i$ with depth $\le n$ modeled on 
$\mathcal C(n)$, either 
$\lim\inf N_1^i=\infty$ or $\lim\inf N_k^i > C_n$,
where $N_1^i$, $N_k^i$ are the degrees of $\mathcal B^i$,  and 
$C_n$ is the positive constant given in Lemma
\ref{lem:pt-cover}.
Let $p_i$ be the center of $B_i$.
By  Theorem \ref{thm:precpt}, we may assume that 
$(M_i, p_i)$ converges to a pointed complete Alexandrov  space 
$(X, p)$ with curvature $\ge -1$
with respect to the pointed Gromov-Hausdorff topology.
We may also assume that $B_i$ converges to a metric ball $B$
around $p$ under this convergence.
If $X$ is a point, we rescale the metric of $M_i$ so that the new 
diameter is equal to $1$. Thus we may assume that 
$1\le\dim X\le n$. Applying Lemma  \ref{lem:pt-cover} to
the convergence $B_i\to B$,  we obtain 
finitely many points $\{ x_{\alpha} \}_{\alpha=1}^N$ of $B$
and positive numbers $r_{\alpha}$ with 
$B \subset \cup B(x_{\alpha}, r_{\alpha}/2)$ such that 
for a subsequence $\{ j\}$ of $\{ i\}$, 
we get $p_{\alpha}^j\in M_j$ converging to $x_{\alpha}$
with $\tau_{n-1}^*(B(p_{\alpha}^j,r_{\alpha}))\le C_n$
for every $1\le\alpha\le N$.
Together with the covering $\{ B(p_{\alpha}^j,r_{\alpha})\}_{\alpha}$
of $M_j$, this enables us to
obtain an essential covering system $\mathcal B^j$ of $B_j$
with depth $\le n$  modeled on 
$\mathcal C(n)$ such that 
$N_1^j \le N$ and  $N_k^j \le C_n$.
This is a contradiction.
\end{proof}

\begin{rem} \label{rem:mfd}
Let $\mathcal M(n)$ denote the subfamily of $\mathcal A(n)$
consisting of Riemannian manifolds.
By  Theorem \ref{thm:refine-cover}, each metric ball of radius $\le D$ in
$M\in\mathcal M(n)$ has an essential covering 
with depth $\le n$ modeled on 
$\mathcal C(n)$ whose number is uniformly bounded.
In this case, one can easily check from the proof that
each metric ball in the essential covering
is homeomorphic to an $n$-disk. Namely, for 
$\mathcal M(n)$, we can take the single 
$n$-dimensional Euclidean space $\R^n$ as the model family 
in stead of $\mathcal C(n)$.
\end{rem}

\begin{rem} \label{rem:delta}
Let $\delta >0$ be given. Under the situation of Theorem
\ref{thm:refine-cover}, if we restrict ourselves to metric balls of
radii $<\delta$, 
we can construct an isotopy covering system 
$\mathcal B=\{ B_{\alpha_1\cdots\alpha_k}\}$
of $B$ with depth $\le n$  modeled on 
$\mathcal C(n)$ such that 
\begin{enumerate}
 \item  the radius of $B_{\alpha_1}$  is less than $\delta$ for 
        every $1\le \alpha_1\le N_1;$
 \item the first degree  $N_1\le C_n(D,\delta)$ for some uniform 
       constant $C_n(D,\delta);$
 \item any other higher degree $\le C_n$.
\end{enumerate}
In  particular $\tau_n(B)\le C_n(D,\delta)(C_n)^{n-1}$.
\end{rem}

Parhaps Examples \ref{ex:torus} and \ref{ex:nil} will be 
ones of maximal case.

 \begin{conj}\label{conj:max}
 Let $M$ be an $n$-dimensional compact 
 Alexandrov space with nonnegative curvature.
 Then $\tau_n(M)\le 2^n$.
 \end{conj}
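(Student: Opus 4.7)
The plan is to argue by induction on $n$, the base case $n = 0$ (a point) giving $\tau_0 = 1 = 2^0$. For the inductive step the target is an isotopy covering system of depth $\le n$ modelled on $\mathcal C(n)$ in which every degree equals $2$; equivalently, a covering of $M$ by two metric balls $B_1, B_2$ such that each admits an isotopy subball $\hat B_i$ with an essential covering by $2^{n-1}$ balls of depth $\le n - 1$.

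For the initial binary cover I would pick a diameter-realizing pair $p_1, p_2 \in M$ with $d(p_1, p_2) = D := \diam M$ and take $B_i = B(p_i, R)$ with $R$ chosen so that $B_1 \cup B_2 = M$; on a nonnegatively curved compact Alexandrov space, Toponogov's theorem applied to $\Delta p_1 x p_2$ (using that $(p_1, p_2)$ realizes the diameter) constrains how a point $x$ can lie far from both $p_1$ and $p_2$, allowing the choice of such $R < D$ with controlled geometry. The key regularity I would then need is that, after shrinking $B_i$ to a sufficiently small concentric ball $\hat B_i = B(p_i, r)$, the annulus between $\hat B_i$ and $B_i$ contains no critical points of $d_{p_i}$, so that $\hat B_i$ is an isotopy subball of $B_i$ by Lemma \ref{lem:iso}.

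To cover each $\hat B_i$, I would invoke the rescaling $(\tfrac{1}{r} M, p_i) \to (K_{p_i}, o_{p_i})$ and apply the induction hypothesis to the link $\Sigma_{p_i}$, which is an $(n - 1)$-dimensional compact Alexandrov space of curvature $\ge 1$, in particular nonnegatively curved: an essential covering of $\Sigma_{p_i}$ by $2^{n-1}$ balls of depth $\le n - 2$ coned off gives an essential covering of $K_{p_i}$ outside a small neighbourhood of the vertex, which transports to $\hat B_i$ via Perelman stability (Theorem \ref{thm:stab}). This would yield $\tau_n(M) \le 2 \cdot 2^{n-1} = 2^n$.

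The main obstacle is the collapsing case, where $M$ itself may be highly collapsed (as in Example \ref{ex:nil}) and the relevant local scale at $p_i$ is not the ambient scale but the much smaller $\delta_i$ produced by Theorem \ref{thm:rescal}. The rescaled limit $(Y, y_0)$ is then a \emph{noncompact} nonnegatively curved Alexandrov space rather than a compact link, and the induction hypothesis as stated does not apply; one must formulate and prove a parallel pointed version of the conjecture for noncompact nonnegatively curved spaces and show that it iterates cleanly through the at most $n$ nested rescalings while preserving the sharp factor $2$ at each level. I expect this to be the main technical burden: retaining the constant exactly $2$---rather than the qualitative bound $C_n$ of Theorem \ref{thm:refine-cover}---seems to require using the rigidity of nonnegative curvature (soul theorem, splitting theorem, structure of isometric $\R$-actions on collapsed limits) in a genuinely global way.
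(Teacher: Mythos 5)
There is nothing in the paper to compare against here: the statement you are proving is stated as Conjecture \ref{conj:max} and is left \emph{open}; the author offers only Examples \ref{ex:torus} and \ref{ex:nil} as motivation, and the quantitative results actually proved (Theorem \ref{thm:refine-cover}) give non-explicit constants $C_n(D)$, $C_n$, not the sharp bound $2^n$. Your proposal does not close this gap. You yourself concede that the collapsing case --- which is precisely the content of the conjecture, as the motivating examples are maximally collapsed nilmanifolds --- would require formulating and proving a pointed analogue for noncompact nonnegatively curved spaces that iterates through the nested rescalings of Theorem \ref{thm:rescal} while keeping the factor exactly $2$ at each level; no such statement is formulated, let alone proved, so what remains is a plan, not a proof.

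Beyond the admitted gap, the non-collapsed part of your sketch also has concrete problems. First, the initial two-ball cover from a diameter-realizing pair can fail even in nonnegative curvature: for the double of a flat equilateral triangle with side $s$, the diameter $D=s$ is realized by any two vertices, and the third vertex lies at distance $D$ from both, so no radius $R<D$ makes $B(p_1,R)\cup B(p_2,R)=M$; Toponogov comparison does not rescue the choice of a proper $R$ with ``controlled geometry.'' Second, the step ``cone off an essential covering of $\Sigma_{p_i}$ to cover $K_{p_i}$'' does not produce metric balls: cones over metric balls of the link are not metric balls of the cone, and $\tau_n$ is defined via coverings by \emph{metric} balls of $M$ (modeled on $\mathcal C(n)$), so the induction on the $(n-1)$-dimensional link does not transport, via Theorem \ref{thm:stab}, into an admissible isotopy covering system; this is exactly why the paper's own argument (Lemma \ref{lem:pt-cover}) inducts on the dimension of the collapsed limit using balls in the rescaled spaces themselves rather than on the dimension of $M$ via links. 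Retaining the constant $2$ rather than $C_n$ is the genuinely hard point, and nothing in the proposal supplies a mechanism for it.
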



\section{Betti numbers} \label{sec:sum}

In this section, we apply Theorem \ref{thm:refine-cover} to 
prove Corollary \ref{cor:sum}.
We consider homology groups with any coefficient field $F$.
Let $\beta(\quad)$ denote the total Betti number for simplicity.

We make use of the following machinery in \cite{Gr:betti}, whose proof
is  based on Leray's spectral sequence.

\begin{lem}[Topological lemma (\cite{Gr:betti})] \label{lem:top}
Let $B_{\alpha}^i$, $1\le \alpha \le N$, $0\le i\le n+1$,
be open subsets of an $n$-dimensional space $M$, with 
\[
  B_{\alpha}^0\subset B_{\alpha}^1\subset \cdots B_{\alpha}^{n+1},
\]
and set $A^i := \cup_{i=1}^N B_{\alpha}^i$.
Let $I_{+}$ denote the set of all multi-indices $(\alpha_1, \ldots,
\alpha_m)$
with $1\le \alpha_1 < \ldots <\alpha_m \le N$ and with non-empty
intersection
$\cap_{j=1}^m B_{\alpha_j}^{n+1}$.
For each $\mu=(\alpha_1, \ldots, \alpha_m)\in I_+$, let 
$f_{\mu}^i:H_{*}(B_{\alpha_1}^i \cap \cdots \cap B_{\alpha_m}^i) \to
H_{*}(B_{\alpha_1}^{i+1} \cap \cdots \cap B_{\alpha_m}^{i+1})$ be the 
inclusion homomorphism. Then the rank of the inclusion homomorphism
$H_*(A^0)\to H_*(A^{n+1})$ is bounded above by the sum
\begin{equation*}
   \sum_{0\le i\le n, \mu\in I_+}
              \text{\rm rank} \,f_{\mu}^i
\end{equation*}
\end{lem}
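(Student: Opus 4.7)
The plan is to prove the bound by iterating the Mayer--Vietoris rank inequality for a two-set cover, and then factoring each resulting inclusion through the intermediate levels.

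The core ingredient is a \emph{two-set rank inequality}: for open sets $U\subset U''$ and $V\subset V''$, writing $r(X,X'') := \mathrm{rank}(H_*(X)\to H_*(X''))$,
\[
 r(U\cup V,\,U''\cup V'') \;\le\; r(U,U'')+r(V,V'')+r(U\cap V,\,U''\cap V'').
\]
I would obtain this by comparing the Mayer--Vietoris long exact sequences of $(U\cup V;U,V)$ and $(U''\cup V''; U'',V'')$. Each degree $k$ yields a short exact sequence $0\to K_k\to H_k(U\cup V)\to C_{k-1}\to 0$, where $K_k$ is the image of $H_k(U)\oplus H_k(V)$ and $C_{k-1}$ is the image of the Mayer--Vietoris connecting map inside $H_{k-1}(U\cap V)$. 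The inclusion $U\cup V\hookrightarrow U''\cup V''$ is a filtered morphism of these sequences, and the elementary bound $\mathrm{rank}(\varphi)\le \mathrm{rank}(\varphi|_{\mathrm{sub}})+\mathrm{rank}(\bar\varphi)$ summed over $k$ produces the inequality.

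With this in hand I would iterate on $N$. Splitting $A^i = (\bigcup_{\alpha<N}B_\alpha^i)\cup B_N^i$ and noting that $(\bigcup_{\alpha<N}B_\alpha^i)\cap B_N^i = \bigcup_{\alpha<N}(B_\alpha^i\cap B_N^i)$ carries the same nested structure (now with $N-1$ sets), an induction on $N$ produces
\[
 r(A^0,\,A^{n+1})\;\le\;\sum_{\mu\in I_+} r\!\left(B_\mu^0,\,B_\mu^{n+1}\right),
\]
where $B_\mu^i$ denotes the intersection along $\mu$ and multi-indices with $B_\mu^{n+1}=\emptyset$ drop out. Finally, each inclusion $B_\mu^0\hookrightarrow B_\mu^{n+1}$ factors through every $B_\mu^i$, so for every $0\le i\le n$ one has $r(B_\mu^0,B_\mu^{n+1})\le\mathrm{rank}(f_\mu^i)$; in particular this is bounded by $\sum_{i=0}^n\mathrm{rank}(f_\mu^i)$. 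Summing over $\mu\in I_+$ is precisely the stated estimate.

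The only step requiring actual work is the two-set rank inequality; the induction on $N$ and the factorization step are purely formal. The main obstacle is therefore the diagram chase on Mayer--Vietoris sequences: one must carefully extract the short-exact-sequence decomposition of $H_*(U\cup V)$ and verify that the inclusion into $H_*(U''\cup V'')$ preserves the filtration, so that the elementary filtered rank bound can be applied uniformly in $k$.
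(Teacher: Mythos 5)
Your argument has a genuine gap at its very core: the ``elementary bound $\mathrm{rank}(\varphi)\le \mathrm{rank}(\varphi|_{\mathrm{sub}})+\mathrm{rank}(\bar\varphi)$'' for a map compatible with subspaces is false. Take $\varphi=\mathrm{id}\colon F\to F$ with subspaces $S=0$ upstairs and $S''=F$ downstairs: both $\varphi|_S$ and $\bar\varphi$ are zero, yet $\mathrm{rank}(\varphi)=1$. In the Mayer--Vietoris ladder this is exactly the dangerous phenomenon: writing $K=\ker\partial=\mathrm{im}\,j$ and $K''=\ker\partial''=\mathrm{im}\,j''$, one has $\mathrm{rank}(\varphi)\le \dim\bigl(\varphi(H_*(U\cup V))\cap K''\bigr)+\mathrm{rank}(\bar\varphi)$, and the first term is \emph{not} controlled by $\mathrm{rank}(\varphi|_K)$ --- a class whose MV boundary is nonzero at level $0$ can map to a class that lies in $\mathrm{im}\,j''$ without coming from level-$0$ classes of $U$ or $V$; to control it one must push it through \emph{one more} inclusion, using $\mathrm{rank}(H_*(U'')\to H_*(U'''))$, etc. So the correct two-set statement needs three levels, e.g. $\mathrm{rank}\bigl(H_*(U^0\cup V^0)\to H_*(U^2\cup V^2)\bigr)\le r(U^1,U^2)+r(V^1,V^2)+r(U^0\cap V^0,U^2\cap V^2)$, and each Mayer--Vietoris splitting consumes a level of the ladder.

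Moreover, your two-level two-set inequality (and hence the intermediate claim $\mathrm{rank}(H_*(A^0)\to H_*(A^{n+1}))\le\sum_\mu r(B_\mu^0,B_\mu^{n+1})$, which would sharpen the lemma to $\sum_\mu\min_i\mathrm{rank}\,f_\mu^i$) is not merely unproven but false. Let $M$ be a planar open neighborhood of a wedge of $g$ circles, $N=2$, with $B_1^0$ a contractible open set (a thickened tree), $B_2^0$ a disjoint union of $g$ small disks so that $B_1^0\cup B_2^0=M$ and $B_1^0\cap B_2^0$ is $2g$ disks, and set $B_\alpha^i=M$ for all $i\ge 1$. Then $\mathrm{rank}\bigl(H_*(A^0)\to H_*(A^{n+1})\bigr)=1+g$, while $r(B_1^0,M)+r(B_2^0,M)+r(B_1^0\cap B_2^0,M)=1+1+1=3$, which is exceeded once $g\ge 3$. (The lemma itself is not contradicted, because its right-hand side includes the intermediate ranks $f_\mu^i$ for $i\ge1$, which here equal $1+g$.) This shows the intermediate levels are essential, not a formality: iterating the (correct, three-level) two-set inequality over $N$ sets would consume roughly $N$ levels, so to get by with only $n+2$ levels one must use that $M$ is $n$-dimensional (homology vanishes above degree $n$), which is precisely what Gromov's argument via the Leray/Mayer--Vietoris spectral sequence of the covering does. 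Note also that the paper does not reprove this lemma; it cites \cite{Gr:betti} and the spectral-sequence proof, so the fix is either to reproduce that argument or to redo your induction keeping careful track of how many levels each step consumes and where the dimension bound truncates it.
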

\bigskip

For any subset $X\subset M$ and $\delta\ge 0$, let 
$U_{\delta}(X) := \{ x\,|\,d(x,X)\le \delta \}$.
We define 
{\it $\delta$-content}, denoted by $\delta$-${\rm cont}(X)$ of $X$ 
as the rank of the 
inclusion homomorphism
$H_*(X)\to H_*(U_{\delta}(X))$. 
Observe that $0$-${\rm cont}(X)=\beta(X)$ may be infinite. 
However we have

\begin{thm} \label{thm:closed}
For given  $n$, $D >0$ and $\delta>0$, there is a positive integer 
$C(n,D,\delta)$
such that if $X$ is a subset of diameter $\le D$ in an $n$-dimensional 
complete Alexandrov space $M$ with curvature $\ge -1$,
then 
\[
      \text{$\delta$-${\rm cont}$}(X) \le  C(n,D,\delta).
\]
\end{thm}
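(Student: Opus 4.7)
The plan is to combine Theorem \ref{thm:refine-cover} (actually its refined version in Remark \ref{rem:delta}) with Gromov's topological lemma (Lemma \ref{lem:top}). Since $X$ has diameter $\le D$, fix any $p \in X$ so that $U_\delta(X) \subset B(p, D+\delta)$. Apply Remark \ref{rem:delta} to the ball $B(p, D+\delta)$ with a small parameter $\delta' < \delta/(4(n+2))$, obtaining an isotopy covering system of depth $\le n$ modeled on $\mathcal C(n)$ with uniformly bounded degrees. Through the isotopies built into the system, this yields a genuine open covering $\tilde{\mathcal U} = \{\tilde B_\alpha\}_{\alpha \in \hat A}$ of $X$ by contractible open sets, with cardinality $N \le C_n(D,\delta')(C_n)^{n-1}$, and with each $\tilde B_\alpha$ contained in a small neighborhood of $X$, say $\tilde B_\alpha \subset U_{\delta/2}(X)$.

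Next I would produce the nested sequences needed in Lemma \ref{lem:top}. For each $\alpha$, set
\[
B_\alpha^i := U_{i\delta/(2(n+2))}(\tilde B_\alpha), \qquad i = 0, 1, \ldots, n+1.
\]
Then $A^0 = \bigcup_\alpha B_\alpha^0 \supset X$ and $A^{n+1} \subset U_\delta(X)$, so the inclusion $H_*(X) \to H_*(U_\delta(X))$ factors through $H_*(A^0) \to H_*(A^{n+1})$; hence $\delta$-${\rm cont}(X)$ is bounded above by the sum
\[
\sum_{0 \le i \le n,\, \mu \in I_+} \text{\rm rank}\, f_\mu^i
\]
given by Lemma \ref{lem:top}. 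Since $N$ is uniformly bounded, the number of multi-indices $\mu$ in $I_+$ is bounded by $2^N$, which is itself uniformly bounded.

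The main obstacle is bounding the rank of each $f_\mu^i$, i.e.\ controlling the inclusion homomorphism on an $m$-fold intersection of $B_{\alpha_j}^i$ into the corresponding intersection of $B_{\alpha_j}^{i+1}$. The intersection $\bigcap_j B_{\alpha_j}^{i+1}$ lies in an ambient ball of uniformly bounded diameter in $M$, so one can apply Remark \ref{rem:delta} again, this time to cover a neighborhood of $\bigcap_j B_{\alpha_j}^i$ inside $\bigcap_j B_{\alpha_j}^{i+1}$ by an essential covering whose cardinality is again uniformly bounded. One then estimates the rank of $f_\mu^i$ by the total Betti number of the intermediate open set produced by this second essential covering, which is controlled by a standard Mayer--Vietoris argument using the contractibility of the essential-covering balls — combined, if necessary, with an induction on dimension $n$ (the base case $n=1$ being trivial, and the intersections involved being effectively of lower Alexandrov complexity). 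Summing the resulting uniform bounds over the bounded number of pairs $(i,\mu)$ produces the required constant $C(n,D,\delta)$.
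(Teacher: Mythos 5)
Your overall frame (Remark \ref{rem:delta} plus Lemma \ref{lem:top}, factoring $H_*(X)\to H_*(U_{\delta}(X))$ through $H_*(A^0)\to H_*(A^{n+1})$) matches the paper, but the step where you bound $\mathrm{rank}\,f_{\mu}^i$ has a genuine gap, and that step carries all the content. Two problems. First, you build the nested sets $B_{\alpha}^i$ as metric neighborhoods of the deformed sets $\tilde B_{\alpha}$ produced by the isotopies; these are not metric balls, so Gromov's squeezing trick is unavailable. For genuine concentric balls with geometrically growing radii (the paper takes $\lambda_i=10^i$), a nonempty intersection $B_{\gamma_1}^i\cap\cdots\cap B_{\gamma_m}^i$ is contained in the smallest ball $B_{\gamma_s}^i$, and by the triangle inequality $\frac{1}{2}B_{\gamma_s}^{i+1}\subset B_{\gamma_1}^{i+1}\cap\cdots\cap B_{\gamma_m}^{i+1}$, so $f_{\mu}^i$ factors through $H_*(B_{\gamma_s}^i)\to H_*(\frac{1}{2}B_{\gamma_s}^{i+1})$ and its rank is bounded by the total Betti number of a single smaller ball. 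Nothing of this sort holds for neighborhoods of the $\tilde B_{\alpha}$, whose mutual intersections have uncontrolled topology. Second, your proposed repair --- covering an intermediate set by another essential covering and controlling its total Betti number ``by Mayer--Vietoris using contractibility of the pieces'' --- cannot work as stated: a covering by contractible open sets gives no bound on Betti numbers unless all multiple intersections are also controlled, which is exactly the original problem again (this is the point the paper itself makes with $\mathrm{cell}(M)\le\dim M+1$: every closed $n$-manifold is covered by $n+1$ contractible open sets, yet total Betti numbers are unbounded). Moreover, to bound $\mathrm{rank}\,f_{\mu}^i$ by the Betti number of an intermediate open set $W$ you need the inclusion to factor as $\bigcap_j B_{\gamma_j}^i\subset W\subset\bigcap_j B_{\gamma_j}^{i+1}$ with $\beta(W)$ controlled, and you have not produced such a $W$.

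For comparison, the paper never uses the derived genuine covering $\tilde{\mathcal U}$ in this argument: it covers $X$ by the top-level metric balls $B_{\alpha_1}$ of the system (property (1) of an isotopy covering system), chosen of radius $<10^{-(n+2)}\delta$, and uses their concentric dilates $B_{\alpha_1}^i=10^iB_{\alpha_1}$. The missing ingredient in your argument is supplied there by Lemma \ref{lem:ball-sys}: a reverse induction down the depth of the covering system showing $\beta(B_{\alpha_1\cdots\alpha_k})\le C_n$ at every level, whose inductive step applies Lemma \ref{lem:top} to the concentric coverings of the isotopic subball $\hat B_{\alpha_1\cdots\alpha_k}$ and bounds each $\mathrm{rank}\,f_{\mu}^i$ by the inductively bounded Betti number of the minimal-radius ball via the squeeze above. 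You need some version of this concentric-ball mechanism and level-by-level induction; as written, your estimate of $\mathrm{rank}\,f_{\mu}^i$ is unsupported.
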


Corollary \ref{cor:sum} is a direct consequence of Theorem \ref{thm:closed}.
Although it is not explicitly stated in \cite{Gr:betti} or 
\cite{Abr:lowII}, Theorem \ref{thm:closed} also follows from the 
methods there. Below we give the proof of Theorem \ref{thm:closed}
based on Theorem \ref{thm:refine-cover}.

For a  subset $X$ of diameter less than $D$ in a space $M \in \mathcal
A(n)$,  let $B$ an open metric $D$-ball in $M$
containing $X$.  
For $\delta>0$, take 
an isotopy covering system
$\mathcal B =\{ B_{\alpha_1\cdots\alpha_k}\}$ of $B$ with depth $\le
n$  modeled on $\mathcal C(n)$
satisfying the conclusion of Theorem \ref{thm:refine-cover} and
Remark \ref{rem:delta} such that the radii of $B_{\alpha_1}$
are less than $10^{-(n+2)}\delta$ for all $1\le \alpha_1\le N_1$.
To apply Lemma \ref{lem:top}, we let 
$\lambda_i:= 10^i$ for $0\le i\le n+1$, and put
\[
  B_{\alpha_1\cdots\alpha_k}^i := \lambda_i
  B_{\alpha_1\cdots\alpha_k}.
\]
In view of  the conclusion $(2)$ of Theorem \ref{thm:rescal}, 
we may assume that 
\begin{enumerate}
\item $B_{\alpha_1\cdots\alpha_k}^{n+1}\subset 
          B_{\alpha_1\cdots\alpha_{k-1}};$
 \item $B_{\alpha_1\cdots\alpha_k}^{i}$ is an isotopic subball of 
       $B_{\alpha_1\cdots\alpha_{k}}^{i+1}$,
\end{enumerate}
for each $1\le\alpha_k\le N_k(\alpha_1\cdots\alpha_{k-1})$ and 
$0\le i\le n+1$.

Let $\mathcal U=\{ B_{\alpha}\}_{\alpha\in\hat A}$ be the essential
covering of $B$ associated with $\mathcal B$.

\begin{lem} \label{lem:ball-sys}
For every  $\alpha=\alpha_1\cdots\alpha_{\ell}\in\hat A$ and 
every $1\le k\le \ell$ we have
\[
    \beta( B_{\alpha_1\cdots\alpha_k})\le C_n.
\]
\end{lem}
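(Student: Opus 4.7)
The plan is to argue by reverse induction on $k$, producing a constant $C_{n,k}$ depending only on $n$ with $\beta(B_{\alpha_1\cdots\alpha_k})\le C_{n,k}$; setting $C_n:=\max_{1\le k\le n}C_{n,k}$ then gives the stated bound, since the depth of $\mathcal B$ is at most $n$. For the base case $k=\ell$, the multi-index $\alpha$ lies in $\hat A$, so $B_\alpha$ is modeled on $\mathcal C(n)$, i.e., homeomorphic to a Euclidean cone over a compact Alexandrov space, which is contractible; hence $\beta(B_\alpha)=1$.

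For the inductive step, assume the bound holds at level $k+1$, and abbreviate $B:=B_{\alpha_1\cdots\alpha_k}$, $\hat B:=\hat B_{\alpha_1\cdots\alpha_k}$, and $B_j^i:=B^i_{\alpha_1\cdots\alpha_k\alpha_{k+1}^j}$, where $j$ ranges over the $N=N_{k+1}(\alpha_1\cdots\alpha_k)\le C_n$ children and $i$ over the inflation levels $0,\ldots,n+1$. Put $A^i:=\bigcup_{j=1}^N B_j^i$. Property (1) of the setup preceding the lemma gives $A^{n+1}\subset B$; the covering property of $\{B_j^0\}_j$ over $\hat B$ (from the definition of an isotopy covering system) gives $\hat B\subset A^0$; and property (2) together with Lemma \ref{lem:iso} implies that each inclusion $B_j^i\hookrightarrow B_j^{i+1}$ is a homotopy equivalence, as is $\hat B\hookrightarrow B$.

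Since the composition $H_*(\hat B)\to H_*(A^0)\to H_*(A^{n+1})\to H_*(B)$ coincides with the inclusion-induced isomorphism $H_*(\hat B)\xrightarrow{\cong} H_*(B)$, and the rank of a composition is bounded by the rank of any intermediate factor, one obtains
\[
\beta(B)\;\le\;\mathrm{rank}\bigl(H_*(A^0)\to H_*(A^{n+1})\bigr).
\]
Applying the Topological Lemma \ref{lem:top} to the family $\{B_j^i\}_{j,i}$ bounds this rank by $\sum_{0\le i\le n,\;\mu\in I_+}\mathrm{rank}(f_\mu^i)$, in which the number of admissible multi-indices $\mu$ is at most $2^N\le 2^{C_n}$, and the number of indices $i$ is $n+1$.

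The main obstacle is bounding each $\mathrm{rank}(f_\mu^i)$, since a priori it depends on the Betti numbers of the intersections $\bigcap_s B_{j_s}^i$ and $\bigcap_s B_{j_s}^{i+1}$, which are not directly controlled by the inductive hypothesis. My approach is to exploit the homotopy equivalences from property (2): for $s=1,\ldots,m$ one has $\beta(B_{j_s}^{i+1})=\beta(B_{\alpha_1\cdots\alpha_k\alpha_{k+1}^{j_s}})\le C_{n,k+1}$ by induction, and the inclusion of intersections factors through inclusions into individual factors whose homology is controlled. Iterating the Topological Lemma on the intersection, using the remaining inflation levels as ambient spaces, should produce a bound on $\mathrm{rank}(f_\mu^i)$ depending only on $n$ and $C_{n,k+1}$; carefully tracking the bookkeeping of available inflation levels to ensure the iteration terminates is the technical heart of the argument. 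Combining yields $C_{n,k}\le (n+1)\cdot 2^{C_n}\cdot(\text{intersection bound})$, a function of $n$ and $C_{n,k+1}$ alone, which closes the induction.
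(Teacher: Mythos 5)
Your setup matches the paper's: reverse induction on $k$, the base case via contractibility of balls modeled on $\mathcal C(n)$, the sandwich $\hat B\subset A^0\subset A^{n+1}\subset B$ giving $\beta(B)=\beta(\hat B)\le \mathrm{rank}\,[H_*(A^0)\to H_*(A^{n+1})]$, and then Lemma \ref{lem:top} with the count $(n+1)2^{N}$ of pairs $(i,\mu)$. But at the decisive point --- bounding $\mathrm{rank}(f_\mu^i)$ --- you leave a genuine gap. The paper's argument here is a concrete geometric trick (Gromov's): for $\mu=(\gamma_1,\ldots,\gamma_m)\in I_+$ choose $B_{\gamma_s}$ of \emph{minimal radius} among $B_{\gamma_1},\ldots,B_{\gamma_m}$; since the balls $B_{\gamma_j}^i$ meet and the inflation factors are $\lambda_i=10^i$, one has the chain of inclusions
\[
B_{\gamma_1}^i\cap\cdots\cap B_{\gamma_m}^i\ \subset\ B_{\gamma_s}^i\ \subset\ \tfrac12 B_{\gamma_s}^{i+1}\ \subset\ B_{\gamma_1}^{i+1}\cap\cdots\cap B_{\gamma_m}^{i+1},
\]
(the last inclusion because the center of $B_{\gamma_s}$ lies within $\lambda_i(r_s+r_j)\le 2\cdot 10^i r_j$ of each center, and $r_s\le r_j$). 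Hence $f_\mu^i$ factors through $H_*(B_{\gamma_s}^i)\to H_*(\tfrac12 B_{\gamma_s}^{i+1})$, whose rank equals $\beta(B_{\gamma_s})\le C_{n,k+1}$ because the concentric inflations are isotopic subballs of one another (property (2), coming from Theorem \ref{thm:rescal}(2) and Lemma \ref{lem:iso}). This is the entire reason the scales $\lambda_i=10^i$ were introduced.

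Your proposed substitute --- ``iterating the Topological Lemma on the intersection, using the remaining inflation levels as ambient spaces'' --- is not a proof and does not obviously work. The inductive hypothesis controls only $\beta$ of the balls $B_{j}$ themselves, not of intersections $\bigcap_s B_{j_s}^i$, which are not members of the covering system; and the inclusion of the level-$i$ intersection into the level-$(i+1)$ intersection does \emph{not} factor through any single ball $B_{j_s}^{i+1}$ unless you establish precisely the kind of containment displayed above (a full ball $B_{j_s}^{i+1}$ is of course not contained in the intersection). So the ``factoring through individual factors'' you invoke is exactly the missing step, and without the minimal-radius/factor-$10$ argument nothing in your outline bounds $\mathrm{rank}(f_\mu^i)$. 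With that trick inserted, your bookkeeping with constants $C_{n,k}$ and $C_n:=\max_k C_{n,k}$ (which in fact cleans up a sloppiness in the paper's statement, where the constant grows by a factor $(n+1)2^{C_n}$ at each level) closes the induction correctly.
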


\begin{proof}
We prove it by the reverse induction on $k$.
The case $k=\ell$ is clear since $B_{\alpha_1\cdots\alpha_{\ell}}$ is 
contractible.
Suppose the conclusion $\beta( B_{\alpha_1\cdots\alpha_{k+1}})\le C_n$
for all $1\le \alpha_{k+1}\le N_{k+1}$.
Let $\hat B_{\alpha_1\cdots\alpha_{k}}$ be the isotopic subball of 
$B_{\alpha_1\cdots\alpha_{k}}$ such that 
\[
  \hat B_{\alpha_1\cdots\alpha_{k}}\subset
   \bigcup_{\alpha_{k+1}=1}^{N_{k+1}} B_{\alpha_1\cdots\alpha_{k+1}}.
\]
Since 
$(\alpha_1,\ldots,\alpha_{k})$ is fixed, we put
\begin{align*}
  \hat B := \hat B_{\alpha_1\cdots\alpha_{k}},
                   \qquad & B := B_{\alpha_1\cdots\alpha_{k}}, \\
  B_{\alpha}:= B_{\alpha_1\cdots\alpha_{k}\alpha}, 
        \qquad & B_{\alpha}^i := \lambda_i B_{\alpha},
\end{align*}
for each $1\le\alpha\le N_{k+1}$.
Let $A^i=\bigcup_{\alpha=1}^{N_{k+1}} B_{\alpha}^i$.
From the inclusions $\hat B\subset A^0\subset A^{n+1}\subset B$,
we have $\beta(\hat B)=\beta(B)\le\text{\rm rank of}\,[H_*(A^0)\to
H_*(A^{n+1})]$.
Let $I_+$ denote the set of multi-indices of intersection
for the covering $\{ B_{\alpha}^{n+1}\}_{\alpha}$.
For each $\mu=(\gamma_1,\ldots,\gamma_m)\in I_+$, let 
$B_{\gamma_s}$ have minimal radius among $\{ B_{\gamma_j}\}_{j=1}^m$.
Let $f_{\mu}^i:H_{*}( B_{\gamma_1}^i \cap \cdots \cap B_{\gamma_m}^i)) \to
H_{*}( B_{\gamma_1}^{i+1} \cap \cdots \cap B_{\gamma_m}^{i+1})$ be the 
inclusion homomorphism.
From the inclusions
\[
     B_{\gamma_1}^i \cap \cdots \cap B_{\gamma_m}^i  \subset
     B_{\gamma_s}^i \subset \frac{1}{2} B_{\gamma_s}^{i+1}
      \subset  B_{\gamma_1}^{i+1} \cap \cdots \cap B_{\gamma_m}^{i+1},
\]
we have 
\begin{align*}
  \text{\rm rank}(f_{\mu}^i) 
     & \le \text{\rm rank of}\,[H_*(B_{\gamma_s}^i)\to H_*(\frac{1}{2}
               B_{\gamma_s}^{i+1})]\\
     & =  \beta(B_{\gamma_s}) \\
     & \le C_n.
\end{align*}
Lemma \ref{lem:top} then shows $\beta(B)\le (n+1)2^{C_n} C_n$.
\end{proof}

\begin{proof}[Proof of Theorem \ref{thm:closed}]
Without loss of generality, we may assume that 
$\{ B_{\alpha_1}\}_{\alpha_1=1}^{N(X)}$  is a covering of 
$X$ for some $N(X)$ with $N(X)\le N_1$.
By Lemma \ref{lem:ball-sys}, we have in particular
$\beta(B_{\alpha_1})=\beta(B_{\alpha_1}^i)\le C_n$ 
for all $1\le \alpha_1\le N(X)$ and $0\le i\le n+1$. 
Therefore applying Lemma \ref{lem:top} to 
the concentric coverings
$\{ B_{\alpha_1}^i \}_{\alpha_1=1}^{N(X)}$ of $X$ together with 
\[
   X \subset \bigcup_{\alpha_1=1}^{N(X)} B_{\alpha_1}
       \subset \bigcup_{\alpha_1=1}^{N(X)} B_{\alpha_1}^{n+1}
        \subset U_{\delta}(X),
\]
we have
\[
   \text{$\delta$-{\rm cont}}(X)\le (n+1)2^{C_n(D,\delta)} C_n.
\]
This completes the proof of Theorem  \ref{thm:closed}.
\end{proof}

For a subset $X$ of a metric space, 
we define the {\it homological injectivity radius} of $X$, denoted
by  ${\rm hom.inj}(X)$,  as the supremum of $\delta\ge 0$
such that the inclusion homomorphism $H_*(X)\to H_*(U_{\delta}(X))$ is 
injective for any coefficient field. 

The following is an immediate consequence of Theorem \ref{thm:closed}.

\begin{cor}
For a space $M$ in $\mathcal A(n)$,  let $X_i$ be a sequence of 
subsets of $M$ with $\lim\beta(X_i)=\infty$.
Then one of the following must occur:
\begin{enumerate}
 \item $\lim\inf\, {\rm hom.inj}(X_i)=0;$
 \item $\lim\sup\, \diam(X_i)=\infty$.
\end{enumerate}
\end{cor}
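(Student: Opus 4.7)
The plan is to argue by contrapositive: I will show that if neither (1) nor (2) of the conclusion holds, then $\beta(X_i)$ must stay bounded, contradicting the hypothesis $\lim\beta(X_i)=\infty$. Concretely, if (1) fails then $\lim\inf {\rm hom.inj}(X_i)>0$, so after discarding finitely many indices one has ${\rm hom.inj}(X_i)>\delta_0$ for some fixed $\delta_0>0$; and if (2) fails then $\lim\sup \diam(X_i)<\infty$, so on the same tail $\diam(X_i)\le D$ for some fixed $D$. The goal is then to bound $\beta(X_i)$ uniformly in terms of $n$, $D$, and $\delta_0$ alone.

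The key step is to translate the lower bound on the homological injectivity radius into injectivity of the inclusion map into a \emph{specific} neighborhood that fits the hypotheses of Theorem \ref{thm:closed}. Fix any $\delta$ with $0<\delta<\delta_0$. By the definition of ${\rm hom.inj}(X_i)$, there exists $\delta'\in(\delta,{\rm hom.inj}(X_i))$ such that the inclusion $H_*(X_i)\to H_*(U_{\delta'}(X_i))$ is injective for every coefficient field $F$. This map factors through $H_*(U_\delta(X_i))$ via $H_*(X_i)\to H_*(U_\delta(X_i))\to H_*(U_{\delta'}(X_i))$, and injectivity of a composition forces injectivity of the first arrow. Thus $H_*(X_i;F)\to H_*(U_\delta(X_i);F)$ is injective, so its rank equals $\beta(X_i;F)$. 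In other words, $\beta(X_i)=\delta$-${\rm cont}(X_i)$ for every field.

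Now I apply Theorem \ref{thm:closed} with parameters $n$, $D$, and $\delta$ to conclude
\[
   \beta(X_i) \;=\; \delta\text{-}{\rm cont}(X_i) \;\le\; C(n,D,\delta)
\]
for every $i$ in the tail. Since the right-hand side is a constant independent of $i$, this contradicts $\lim\beta(X_i)=\infty$, completing the argument.

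The proof is essentially bookkeeping once Theorem \ref{thm:closed} is available; the only conceptual point requiring any care is the elementary factorization remark that injectivity of $H_*(X_i)\to H_*(U_{\delta'}(X_i))$ forces injectivity of $H_*(X_i)\to H_*(U_\delta(X_i))$ for any smaller $\delta<\delta'$. Without this observation one could not directly plug the definition of ${\rm hom.inj}$ into Theorem \ref{thm:closed} for a prescribed value of $\delta$, and this is what makes the corollary an immediate consequence rather than an apparently vacuous assertion.
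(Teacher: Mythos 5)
Your argument is correct and is exactly the intended one: the paper offers no separate proof, stating only that the corollary is an immediate consequence of Theorem \ref{thm:closed}, and your contrapositive with the factorization $H_*(X_i)\to H_*(U_{\delta}(X_i))\to H_*(U_{\delta'}(X_i))$ supplies precisely the bookkeeping that makes $\beta(X_i)=\delta$-${\rm cont}(X_i)\le C(n,D,\delta)$ on a tail, contradicting $\lim\beta(X_i)=\infty$.
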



\end{document}